\numberwithin{equation}{section}
\numberwithin{figure}{section}
\numberwithin{table}{section}
\newtheorem{theorem}{Theorem}[section]
\newtheorem{proposition}[theorem]{Proposition}
\newtheorem{lemma}[theorem]{Lemma}
\theoremstyle{definition}
\newcommand{\N}{\mathbb{N}}
\newcommand{\R}{\mathbb{R}}
\newcommand{\Z}{\mathbb{Z}}
\newcommand{\Q}{\mathbb{Q}}
\newcommand{\ep}{\varepsilon}
\renewcommand{\a}{\alpha}
\newcommand{\D}{\Delta}
\newcommand{\abs}[1]{\left\lvert #1 \right\rvert}
\newcommand{\area}[1]{\text{area } #1}
\newcommand{\Fp}{F^{\langle p \rangle}}
\newcommand{\Bp}{{\mathcal B}^{\langle p \rangle}}
\newcommand{\Cp}{{\mathcal C}^{\langle p \rangle}_m}
\newcommand{\wt}[1]{\widetilde{#1}}
\newcommand{\lrp}[1]{\left(#1\right)}
\title[Non-convex geometry of numbers]{Non-convex geometry of numbers \\ and continued fractions}
\author{Nickolas Andersen}
\address{Brigham Young University Mathematics Department, Provo UT 84602}
\email{nick@math.byu.edu}
\author{William Duke}
\address{UCLA Mathematics Department, Box 951555, Los Angeles, CA 90095-1555}
\email{wdduke@ucla.edu}
\author{Zach Hacking}
\address{Brigham Young University Mathematics Department, Provo UT 84602}
\email{zmhacking@gmail.com}
\author{Amy Woodall}
\address{Brigham Young University Mathematics Department, Provo UT 84602}
\email{amy.woodall713@gmail.com}
\thanks{The second author is supported by NSF grant DMS 1701638 and  Simons Foundation Award 554649.}
\begin{document}

\begin{abstract}
    In recent work, the first two authors constructed a generalized continued fraction called the $p$-continued fraction, characterized by the property that its convergents (a subsequence of the regular convergents) are best approximations with respect to the $L^p$ norm, where $p\geq 1$.
    We extend this construction to the region $0<p<1$, where now the $L^p$ quasinorm is non-convex.
    We prove that the approximation coefficients of the $p$-continued fraction are bounded above by $\frac 1{\sqrt 5}+\ep_p$, where $\ep_p\to 0$ as $p\to 0$.
    In light of Hurwitz's theorem, this upper bound is sharp, in the limit.
    We also measure the maximum number of consecutive regular convergents that are skipped by the $p$-continued fraction.
\end{abstract}

\maketitle

\section{Introduction}

A rational number $r/s$ with $r,s\in \Z$ and $s>0$ is said to be a best approximation to an irrational real number $\a$ if, for all rationals $r'/s'\neq r/s$ with $0<s'<s$, we have
\[
|r-s\a|<|r'-s'\a|.
\]

Lagrange \cite{Lag} showed that each irrational $\a$ has infinitely many best approximations $\{p_n/q_n\}_{n=1}^\infty$ and that for each of them
 \begin{equation}\label{d1}
q_n |p_n-q_n\a|<1.\end{equation}
The best approximations  are given explicitly by the convergents 
\begin{equation}\label{eq:pn-qn-def}
\frac{p_n}{q_n} =b_0 +\frac{1}{b_1+}\,\frac{1}{b_2+}\cdots\frac{1}{b_n}
\end{equation}
of the regular continued fraction expansion of $\a$:
 \begin{equation}\label{scf2}
\a=b_0 +\frac{1}{b_1+}\,\frac{1}{b_2+}\cdots := b_0+\cfrac{1}{b_1 +\cfrac{1}{b_2 +\cfrac{1}{\ddots }}}.
\end{equation}
Classical results of Vahlen and Borel (Theorems~5A and 5B of \cite[Ch.~I]{schmidt}) state that among any successive pair of convergents, there is at least one that satisfies 
$q_n |p_n-q_n\a|<\tfrac{1}{2}$ and among any successive triple, there is at least one that satisfies the Hurwitz bound $q_n |p_n-q_n\a|<\tfrac{1}{\sqrt{5}}$.

The notion of best approximation has the following generalization. 
For $(x,y)\in \R^2$ and a fixed $0< p< \infty$, let
\begin{equation} \label{eq:fp-def}
F^{\langle p\rangle}(x,y)=(|x|^p+|y|^p)^{\frac 1p},
\end{equation}
while $F^{\langle \infty\rangle}(x,y)=\max\{|x|,|y|\}.$
If $p\geq 1$ then $ F^{\langle p\rangle}$  gives a norm on $\R^2$ while if $0<p<1$ it is a quasinorm in that it only satisfies the weakened triangle inequality
\[
F^{\langle p\rangle}(x_1+x_2,y_1+y_2)\leq 2^{\frac{1}{p}-1}(F^{\langle p\rangle}(x_1,y_1)+F^{\langle p\rangle}(x_2,y_2)).
\]
For a fixed $p$, we say that $r/s$ is an $L^p$-best approximation to $\a$ if there is a $t>1$ depending only on $r/s$ so that for any rational $r'/s'\neq r/s$ 
\[
F_t^{\langle p\rangle}(s,r-s\a)<F_t^{\langle p\rangle}(s',r'-s'\a),
\]
where 
\begin{equation}
    F_t^{\langle p\rangle}(x,y)=F^{\langle p\rangle}(t^{-1}x,ty).
\end{equation}
It is not difficult to show that $r/s$ is an $L^\infty$-best approximation to $\a$ if and only if it is a best approximation to $\a$ in Lagrange's sense (see Lemma~6.1 in \cite{AD}).
Generalizing the case $p=1$, which is due to Minkowski \cite{Min1,Min2},  it is shown in \cite{AD} that for any fixed $p\geq 1$ the $L^p$-best approximations to $\a$ are those rationals given by the convergents \[
\frac{r_n}{s_n} =a_0 +\frac{\ep_1}{a_1+}\,\frac{\ep_2}{a_2+}\cdots\frac{\ep_n}{a_n} \qquad (\ep_j=\pm1)
\]
of 
a uniquely determined semi-regular continued fraction expansion of $\a$, the $p$-continued fraction.
Furthermore,  each such best approximation satisfies
\[
F^{\langle p\rangle}(s_n,r_n-s_n\a)<\D_p^{-\frac{1}{2}}
\]
where $\D_p$ is the critical determinant of the unit ball $\mathcal{B}^{\langle p \rangle}=\{(x,y) \in \R^2 : F^{\langle p\rangle}(x,y)<1\}.$ 
The value of $\D_p$ is given in \cite[Section~4]{AD} (see also the references therein). It is increasing on $1\leq p\leq \infty$ with $\D_{1}=\tfrac{1}{2}, \D_{2}=\frac{\sqrt{3}}{2}$ and  $\D_{\infty}=1$.
For any $1\leq p< \infty$,  the  inequality between arithmetic and geometric means  gives  
\begin{equation}\label{convm}
s_n|r_n - s_n \a|\leq \left(\frac{s_n^p + |r_n-s_n\a|^p}{2}\right)^\frac{2}{p}< 4^{-\frac{1}{p}}\D_p^{-1}.
\end{equation}
The right-hand side of this inequality increases from $\frac{1}{2}$ to 1 as $p$ goes from 1 to $\infty$.
The convergents $r_n/s_n$ of the $1$-continued fraction, which is Minkowski's diagonal continued fraction \cite{Min3}, thus satisfy
 \begin{equation}\label{d2}
s_n |r_n-s_n\a|  <\tfrac{1}{2}.\end{equation}
In fact, Minkowski showed that these $r_n/s_n$ coincide with those regular convergents $p_m/q_m$ that satisfy  $q_m |p_m-q_m\a|<\tfrac{1}{2}.$
For any $p\geq1$ the sequence of convergents of the $p$-continued fraction give a subsequence of the regular convergents,
but in general do not give all of those that satisfy $q_m |p_m-q_m\a|< (4^{1/p}\D_p)^{-1}.$
For details and references see \cite{AD}.

In this paper we generalize these results to $L^p$-best approximations where $p<1$.
In view of the above, our prime motivation is to show that the right-hand side  in the generalization of \eqref{convm} for these approximations can be as close to  $\tfrac{1}{\sqrt{5}}$ as desired.
This necessitates letting $p \rightarrow 0.$
The associated $p$-continued fractions 
 \begin{equation}\label{scf3}
\a=a_0 +\frac{\ep_1}{a_1+}\,\frac{\ep_2}{a_2+}\cdots, \qquad \gcd(\ep_n,a_n,\ep_{n+1})=1, 
\end{equation}
that we construct have the property that for a fixed $p<1$ the partial numerators $\ep_n$ are integers with
$
|\ep_n|\leq M_p 
$
where $M_p$ depends only on $p$.   However,  $M_p \rightarrow \infty$  as $p\rightarrow 0.$
Since now $F^{\langle p\rangle}$ is only a quasinorm, the ball $\mathcal{B}^{\langle p \rangle}$ is not convex and  we must apply some results of Mordell and Watson \cite{mordell,watson} from the non-convex geometry of numbers (see also the book of Gruber and Lekkerkerker \cite{gruber} and the references therein). 
The theorem below summarizes the main properties of the $p$-continued fraction.

\begin{theorem} \label{thm:main}
    For each $\delta>0$ there exists a $p = p_\delta\in (0,1)$ such that for any irrational $\alpha$ the $p$-continued fraction \eqref{scf3} of $\alpha$ has the following properties.
    \begin{enumerate}
        \item The convergents are precisely the best approximations to $\alpha$ with respect to $\Fp$.
        \item Each convergent $r_n/s_n$ satisfies $s_n|r_n-s_n\alpha|< \frac{1}{\sqrt 5}+\delta$.
        \item 
        There exists a constant $M_p$ (depending only on $p$) such that $|\ep_n|\leq M_p$ for all $n$.
    \end{enumerate}
\end{theorem}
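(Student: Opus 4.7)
The plan is to build the $p$-continued fraction of any irrational $\alpha$ for $0<p<1$ by an inductive geometric procedure analogous to the one in \cite{AD}, then derive the three properties from the non-convex Minkowski-type bounds of Mordell and Watson together with an asymptotic for the critical determinant $\Delta_p$ as $p\to 0$.

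\textbf{Construction and property (1).} Following the scheme for $p\ge 1$ in \cite{AD}, I set $(r_0,s_0)=(a_0,1)$ with $a_0=\lfloor\alpha\rfloor$ and, having produced $(r_{n-1},s_{n-1})$ and $(r_n,s_n)$, take $(r_{n+1},s_{n+1})=a_{n+1}(r_n,s_n)+\ep_{n+1}(r_{n-1},s_{n-1})$ where $(a_{n+1},\ep_{n+1})$ is chosen to produce the smallest dilate of $\Bp$ in the rescaled quasinorm $F_t^{\langle p\rangle}$ (for the appropriate $t>1$) that contains the new lattice point. Since consecutive pairs $(r_n,s_n),(r_{n-1},s_{n-1})$ always form a $\Z$-basis of $\Z^2$, the sequence produced enumerates exactly the $L^p$-best approximations, which is property (1). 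Well-definedness of the algorithm---finiteness of $\ep_{n+1}$ at each stage---will be secured by property (3).

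\textbf{Property (2).} By the theorems of Mordell and Watson \cite{mordell,watson,gruber}, any $2$-dimensional lattice of determinant strictly less than $\Delta_p$ meets the interior of $\Bp$. Applied to the lattice $\{(s,r-s\alpha):r,s\in\Z\}$ rescaled by the appropriate $t_n$, this yields the convergent bound
\[
\Fp(s_n,\,r_n-s_n\alpha)<\Delta_p^{-1/2}.
\]
The AM--GM-type inequality $(AB)^{p/2}\le(A^p+B^p)/2$, applied to $A=s_n$ and $B=|r_n-s_n\alpha|$, then gives
\[
s_n|r_n-s_n\alpha|\le\lrp{\tfrac{s_n^p+|r_n-s_n\alpha|^p}{2}}^{2/p}<4^{-1/p}\Delta_p^{-1},
\]
the exact analogue of \eqref{convm} on the non-convex side. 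The known behavior of $\Delta_p$ from \cite{mordell,watson,gruber}---specifically, that the critical lattice for small $p$ limits onto the golden-section / Fibonacci lattice underlying Hurwitz's theorem---yields
\[
4^{-1/p}\Delta_p^{-1}\longrightarrow\tfrac{1}{\sqrt 5}\quad\text{as }p\to 0^+,
\]
so choosing $p=p_\delta$ sufficiently small forces the right side below $\tfrac{1}{\sqrt 5}+\delta$, establishing property (2).

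\textbf{Property (3).} At step $n$, every vector $a(r_n,s_n)+\ep(r_{n-1},s_{n-1})$ admitted by the selection rule lies in the rescaled $\Bp$-region of $\Fp$-radius at most $\Delta_p^{-1/2}$. Because $\Bp$ is bounded, a packing/pigeonhole estimate limits how many integer translates by $(r_{n-1},s_{n-1})$ can fit inside before leaving this region, yielding $|\ep_{n+1}|\le M_p$ with $M_p$ depending only on $p$ (and diverging as $p\to 0$, consistent with the spike-like degeneration of $\Bp$ along the axes).

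The main obstacle I foresee is the sharp asymptotic $\Delta_p=\sqrt 5\cdot 4^{-1/p}(1+o(1))$ as $p\to 0$: one must identify the critical lattice of the non-convex body $\Bp$ for small $p$ via \cite{mordell,watson} and verify that it tends, after normalization, to the golden-ratio configuration that realizes Hurwitz's constant. A secondary technical point is arranging the packing bound $M_p$ so that it is genuinely uniform in $\alpha$ and $n$, which in turn guarantees the inductive construction is well-defined at every stage.
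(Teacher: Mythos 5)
Your overall strategy (a geometric best-approximation algorithm plus the Mordell--Watson non-convex lattice bound plus an asymptotic for the critical determinant as $p\to 0$) is the same as the paper's, but the proposal has three genuine gaps, the first of which breaks the construction itself. The claim that consecutive $p$-convergents $(r_n,s_n),(r_{n-1},s_{n-1})$ ``always form a $\Z$-basis of $\Z^2$'' is false for $p<1$: the determinant $s_{n+1}r_n-s_nr_{n+1}$ equals (up to sign) the quantity $D_\ell$ of the paper, which is at least the Fibonacci number $F_{\ell+1}$ when $\ell$ regular convergents are skipped, and $\ell\geq 2$ genuinely occurs (this is the content of Theorems~\ref{thm:max-ell} and \ref{thm:1-per-big-skips} and of Table~\ref{tab:phi-conv}). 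Consequently your integer recursion $(r_{n+1},s_{n+1})=a_{n+1}(r_n,s_n)+\ep_{n+1}(r_{n-1},s_{n-1})$ only produces points of the proper sublattice spanned by the previous two convergents and can miss best approximations entirely; it also contradicts your own later assertion that $M_p\to\infty$. The paper instead runs the algorithm purely on lattice points of $L_\alpha$ (choosing, for each critical $t$, the boundary point of maximal $x$-coordinate), obtains \emph{rational} recursion coefficients $\wt a_m,\wt\ep_m$, and only afterwards clears denominators using $\det g_{m-1}$ and $\det g_{m-2}$ to get integral $a_m,\ep_m$. Property (1) then follows from admissibility of $L_\alpha$ for the balls $\mathcal B_t(P_{m-1})$ for $t$ between consecutive critical values, not from any basis property.

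For property (2), you have correctly identified the crux --- that $4^{-1/p}\Delta_p^{-1}\to\frac{1}{\sqrt 5}$ because the critical configuration degenerates to the golden-ratio lattice --- but you explicitly leave it unproven, and it is precisely the nontrivial analytic input: the paper derives it from Watson's description of the critical point $(a_p,b_p)$, namely that $j_p=b_p/a_p$ increases to $\frac{-1+\sqrt5}{2}$ as $p\to0^+$ (Lemma~\ref{lem:jp}), which gives $4^{-1/p}\beta_p\to(j_p^{-1}+j_p)^{-1}\to\frac{1}{\sqrt5}$ (Proposition~\ref{thm:beta}). For property (3), the ``packing/pigeonhole'' step does not engage with the actual difficulty: $\Bp$ is non-convex with long thin spikes along the axes, so bounded area does not by itself bound the number of lattice translates along a fixed primitive direction that meet the region, and in any case the quantity to control is not a coefficient count but the determinants $\det g_m$ (since $|\ep_m|=|(\det g_{m-2})(\det g_m)|$ after clearing denominators). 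The paper's Proposition~\ref{prop:index-upper-bd} does this by a sector-area computation: the triangle spanned by two consecutive convergents on the boundary of $R\mathcal B$ has area $\tfrac12|\det g_m|$, the corresponding circular sector of $R\mathcal B$ has area at least $2^{1-2/p}|\det g_m|$, and admissibility (Proposition~\ref{prop:max-area}) caps the total area. You would need to supply an argument of this kind to make (3) rigorous and uniform in $\alpha$ and $n$.
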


For any $p>0$ the $p$-convergents of $\alpha$ form a subsequence of the regular convergents.
Computation confirms that more regular convergents are skipped by the $p$-continued fraction than just those with $s|r - s\a|$ large.
We now describe the relationship between the $p$-convergents $r_n/s_n$ and the regular convergents $p_n/q_n$.

\begin{theorem} \label{thm:max-ell}
    Fix $p\in (0,1)$ and an irrational $\alpha\in \R$.
    Let $n$ be a positive integer and let $m$ and $\ell$ be such that $r_n/s_n=p_m/q_m$ and $r_{n+1}/s_{n+1}=p_{m+\ell}/q_{m+\ell}$.
    Then 
    \begin{equation} \label{eq:ell-upper-bound}
        \ell \leq \log_\varphi\left( 2^{4/p-1}\sqrt 5 \, \frac{\Gamma\big(1+\frac1p\big)^2}{\Gamma\big(1+\frac2p\big)}\right),
    \end{equation}
    where $\varphi = \frac 12(1+\sqrt 5)$.
\end{theorem}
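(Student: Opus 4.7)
My plan is to work at the critical value $t^* > 1$ of the stretching parameter at which the $(n+1)$-st $p$-convergent supplants the $n$-th as the $L^p$-best approximation, translate admissibility of the associated quasi-ball into a bound on $q_{m+\ell}/q_{m+1}$, and invoke the Fibonacci growth of the regular convergent denominators to extract the stated bound on $\ell$.

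First I would set $d_k = |p_k - q_k\a|$ and take $\Lambda = \{(s, r - s\a) : r, s \in \Z\}$, a determinant-$1$ lattice containing every $(q_k, \pm d_k)$. By the $L^p$-best approximation characterization of the $p$-convergents there is a critical $t^* > 1$ at which
\[
F_{t^*}^{\langle p\rangle}(q_m, d_m) = F_{t^*}^{\langle p\rangle}(q_{m+\ell}, d_{m+\ell}) = c,
\]
while every other nonzero lattice point of $\Lambda$ has strictly larger $F_{t^*}^{\langle p\rangle}$-value. Consequently the open quasi-ball $B = \{F_{t^*}^{\langle p\rangle} < c\}$ is admissible for $\Lambda$, and the skipped regular convergent lattice points $(q_{m+j}, \pm d_{m+j})$ for $1\le j\le \ell-1$ all lie outside $B$. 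Since $\text{area}(B) = c^2\,\text{area}(\Bp) = 4 c^2\,\Gamma(1+1/p)^2/\Gamma(1+2/p)$, admissibility together with the critical-determinant bound for the non-convex $L^p$-quasi-ball (Mordell--Watson) gives $c^2 \leq 1/\D_p$. From the boundary equations $(q_m/t^*)^p + (t^* d_m)^p = c^p$ and $(q_{m+\ell}/t^*)^p + (t^* d_{m+\ell})^p = c^p$, each coordinate is individually bounded by $c$ in magnitude, so in particular
\[
q_{m+\ell}\, d_m \leq c^2 \leq 1/\D_p.
\]
Combined with the standard identity $d_m = (q_{m+1} + q_m/\a_{m+2})^{-1}$, which gives $q_{m+1} d_m > \tfrac12$, this produces $q_{m+\ell}/q_{m+1} \leq 2/\D_p$. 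The Fibonacci lower bound $q_{m+\ell} \geq F_\ell\, q_{m+1}$ (from $q_{k+1}\geq q_k + q_{k-1}$) combined with $F_\ell \geq \varphi^{\ell-1}/\sqrt 5$ then yields $\varphi^\ell \leq 2\sqrt 5\,\varphi/\D_p$. Substituting the explicit Mordell--Watson formula for $\D_p$ valid on $0 < p < 1$ is intended to collapse the right-hand side into precisely $2^{4/p-1}\sqrt 5\,\Gamma(1+1/p)^2/\Gamma(1+2/p)$.

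The hard part will be making the critical-determinant estimate quantitative in the non-convex regime: producing the precise closed form for $\D_p$ (or the equivalent sharp area bound for admissible $L^p$-quasi-balls) so that, after the Fibonacci comparison, the dependence on $\D_p$ collapses exactly into the $\Gamma$-ratio with the factor $2^{4/p-1}$ in the theorem. The Fibonacci bookkeeping and the continued-fraction identity $q_{m+1} d_m > \tfrac12$ are routine by comparison.
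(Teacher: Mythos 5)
Your route is genuinely different from the paper's, and most of its individual steps are sound: the existence of the critical $t^*$ with $F^{\langle p\rangle}_{t^*}(q_m,d_m)=F^{\langle p\rangle}_{t^*}(q_{m+\ell},d_{m+\ell})=c$ and admissibility of the corresponding ball is exactly Lemma~\ref{lem:algorithm}(2)--(3); the coordinate-wise bounds giving $q_{m+\ell}d_m\le c^2$, the estimate $q_{m+1}d_m>\tfrac12$, and the Fibonacci comparison $q_{m+\ell}\ge F_\ell\,q_{m+1}$ are all correct. The gap is in the last step, and it is not just bookkeeping. Admissibility for the determinant-one lattice $L_\alpha$ gives, via Theorem~\ref{thm:mordell}, $c\le 2^{1/p-1/2}c_p$, i.e.\ $c^2\le \beta_p=2^{2/p-1}c_p^2$ (equivalently $1/\Delta_p$ for the critical determinant of $\Bp$). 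The $\Gamma$-ratio in \eqref{eq:ell-upper-bound} is \emph{not} the critical determinant: it is (one quarter of) the \emph{area} of $\Bp$, and the two are related only by the one-sided inequality of Proposition~\ref{prop:max-area}, so the "collapse" you hope for at the end cannot happen. What your argument actually yields is $\varphi^{\ell-1}\le 2\sqrt5\,\beta_p$, i.e.\ $\varphi^{\ell}\le \sqrt5\,\varphi\,2^{2/p}c_p^2$, and this does not imply \eqref{eq:ell-upper-bound} for all $p$: since $c_p\to1$ as $p\to1^-$ (see \eqref{eq:cp-limit-1}), your bound tends to $\log_\varphi(4\sqrt5\,\varphi)\approx5.55$ (so $\ell\le5$), whereas the theorem's right-hand side tends to $\log_\varphi(4\sqrt5)\approx4.55$ (so $\ell\le4$). (For small $p$ your bound is in fact sharper than the theorem's, but that does not rescue the case $p$ near $1$.)

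For contrast, the paper avoids the critical radius $c$ entirely. It observes that $|\det g_{n+1}|=|q_{m+\ell}p_m-q_mp_{m+\ell}|$ equals the quantity $D_\ell$ of Proposition~\ref{prop:D-ell}, which satisfies the Fibonacci-type recursion $D_\ell=b\,D_{\ell-1}+D_{\ell-2}$ and hence is at least $\varphi^\ell/\sqrt5$; the matching upper bound $|\det g|\le C_p$ comes from an area comparison inside the admissible ball (Proposition~\ref{prop:index-upper-bd}), which is where the factor $2^{4/p-1}\Gamma(1+\tfrac1p)^2/\Gamma(1+\tfrac2p)$ originates. If you want to salvage your approach, you would need to replace the step "$c^2\le 1/\Delta_p$" by an area-based bound of this type (or simply prove the theorem with $\beta_p$ in place of the $\Gamma$-expression, which is a different statement).
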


We now show that as $p$ tends to zero, arbitrarily many regular convergents can be skipped in the $p$-continued fraction. This is the case for every irrational for which the regular continued fraction is $1$-periodic.
See Table~\ref{tab:phi-conv} for the case $\alpha=\frac 12(1+\sqrt 5)$.

\begin{small}\begin{table}[b]
    \centering
    \def\arraystretch{0.5}
    \setlength\tabcolsep{7.0pt}
    
    \begin{tabular}{r|cccccccccc}
         $p$\phantom{1} & \multicolumn{10}{c}{First 10 convergents of the $p$-continued fraction}
         \\
         \hhline{===========}
         \\
         $\infty$ & $2$ & $\mfrac{3}{2}$ & $\mfrac{5}{3}$ & $\mfrac{8}{5}$ & $\mfrac{13}{8}$ & $\mfrac{21}{13}$ & $\mfrac{34}{21}$ & $\mfrac{55}{34}$ & $\mfrac{89}{55}$ & $\mfrac{144}{89}$
         \\ \\
         \hline
         \\
         $0.5$ & $2$ & $\mfrac{5}{3}$ & $\mfrac{8}{5}$ & $\mfrac{13}{8}$ & $\mfrac{21}{13}$ & $\mfrac{34}{21}$ & $\mfrac{55}{34}$ & $\mfrac{89}{55}$ & $\mfrac{144}{89}$ & $\mfrac{233}{144}$
         \\ \\
         \hline
         \\
         $0.4$ & $\mfrac{5}{3}$ & $\mfrac{8}{5}$ & $\mfrac{13}{8}$ & $\mfrac{21}{13}$ & $\mfrac{34}{21}$ & $\mfrac{55}{34}$ & $\mfrac{89}{55}$ & $\mfrac{144}{89}$ & $\mfrac{233}{144}$ & $\mfrac{377}{233}$
         \\ \\
         \hline
         \\
         $0.3$ & $\mfrac{8}{5}$ & $\mfrac{13}{8}$ & $\mfrac{21}{13}$ & $\mfrac{34}{21}$ & $\mfrac{55}{34}$ & $\mfrac{89}{55}$ & $\mfrac{144}{89}$ & $\mfrac{233}{144}$ & $\mfrac{377}{233}$ & $\mfrac{610}{377}$
         \\ \\
         \hline
         \\
         $0.25$ & $\mfrac{13}{8}$ & $\mfrac{21}{13}$ & $\mfrac{34}{21}$ & $\mfrac{55}{34}$ & $\mfrac{89}{55}$ & $\mfrac{144}{89}$ & $\mfrac{233}{144}$ & $\mfrac{377}{233}$ & $\mfrac{610}{377}$ & $\mfrac{987}{610}$
         \\ \\
         \hline
         \\
         $0.2$ & $\mfrac{21}{13}$ & $\mfrac{34}{21}$ & $\mfrac{55}{34}$ & $\mfrac{89}{55}$ & $\mfrac{144}{89}$ & $\mfrac{233}{144}$ & $\mfrac{377}{233}$ & $\mfrac{610}{377}$ & $\mfrac{987}{610}$ & $\mfrac{1597}{987}$
         \\ \\
         \hline
         \\
         $0.18$ & $\mfrac{34}{21}$ & $\mfrac{55}{34}$ & $\mfrac{89}{55}$ & $\mfrac{144}{89}$ & $\mfrac{233}{144}$ & $\mfrac{377}{233}$ & $\mfrac{610}{377}$ & $\mfrac{987}{610}$ & $\mfrac{1597}{987}$ & $\mfrac{2584}{1597}$
         \\ \\
         \hline
         \\
         $0.16$ & $\mfrac{55}{34}$ & $\mfrac{89}{55}$ & $\mfrac{144}{89}$ & $\mfrac{233}{144}$ & $\mfrac{377}{233}$ & $\mfrac{610}{377}$ & $\mfrac{987}{610}$ & $\mfrac{1597}{987}$ & $\mfrac{2584}{1597}$ & $\mfrac{4181}{2584}$
         \\ & & &
    \end{tabular}
    \caption{Selected values of $p$ with the first several convergents of the $p$-continued fraction for $\varphi=\frac 12(1+\sqrt 5)$.}
    \label{tab:phi-conv}
\end{table}\end{small}

\begin{theorem} \label{thm:1-per-big-skips}
   Let $a\geq 1$ be an integer and let $\alpha = a+\frac{1}{a+}\,\frac{1}{a+}\,\frac{1}{a+}\cdots=\frac 12(a+\sqrt{a^2+4})$.
   Then for each sufficiently large positive integer $m$ there exists a $p\in (0,1)$ such that $r_0/s_0 = p_m/q_m$.
\end{theorem}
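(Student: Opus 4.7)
My plan is to exhibit, for each sufficiently large $m$, an explicit $p\in(0,1)$ such that the lattice point $P_m := (q_m,\, p_m - q_m\alpha)$ is the smallest-denominator $L^p$-best approximation to $\alpha$, so that $r_0/s_0 = p_m/q_m$ in the $p$-continued fraction. I would use the Pisot structure of $\alpha = \tfrac12(a+\sqrt{a^2+4})$: via its Galois conjugate $\psi = -1/\alpha$ one has the closed form $p_m - q_m\alpha = \psi^{m+1}$, so $|P_m^{(2)}| = y_m := \alpha^{-(m+1)}$ and $P_m^{(1)} = q_m = (\alpha^{m+1}-\psi^{m+1})/\sqrt{a^2+4}$.

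Next I would rewrite the condition ``$P_m$ is an $L^p$-best approximation'' as a system of inequalities in $t^{2p}$. Treating rationals $r/s$ as lattice points $(s,r-s\alpha)\in\R^2$, and also including the ``pre-convergent'' point $e_2:=(0,1)$ --- which corresponds to the formal $p_{-1}/q_{-1}=1/0$ and has $F_t^{\langle p\rangle}(e_2)=t$ --- the classical theory of best approximations, combined with the Mordell--Watson tools from the non-convex geometry of numbers cited in the introduction, reduces the relevant competitors of $P_m$ to the other $P_k$ and to $e_2$. Setting
\[
A_k := \frac{q_{k+1}^p - q_k^p}{y_k^p - y_{k+1}^p}, \qquad B_k := \frac{q_k^p}{1-y_k^p},
\]
and writing out $F_t^{\langle p\rangle}(P_m) < F_t^{\langle p\rangle}(Q)$ for each competitor $Q$, the nearest-neighbor bounds give that $P_m$ is $L^p$-best precisely when both $A_{m-1} < A_m$ and $B_m < B_{m+1}$ (the latter is equivalent by cross-multiplication to the direct comparison $B_m < A_m$ with $e_2$), and that $P_m$ is the \emph{first} $L^p$-best approximation precisely when additionally, for every $k<m$, at least one of these two inequalities fails at $k$.

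I would then turn to the quantitative analysis. Using the Pisot growth, for large $k$ both $A_k$ and $B_k$ can be approximated explicitly: $A_k \sim \alpha^{(2k+3)p}/(a^2+4)^{p/2}$, while $B_k$ contains the factor $1/(1-\alpha^{-(k+1)p})$, which diverges as $p\to 0$ and drives the non-monotonicity of the sequence $\{B_k\}$. As $p$ decreases from $1$ toward $0$, the indices $k$ at which one of the two monotonicities fails advance upward, and a direct calculation shows that for each sufficiently large $m$ there is a nonempty open interval of $p$ in $(0,1)$ --- located near $(\log_\alpha 2)/m$ and of width roughly $(\log_\alpha 2)/m^2$ --- on which $P_m$ is the first $L^p$-best approximation; any such $p$ witnesses $r_0/s_0 = p_m/q_m$. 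The main obstacles are verifying that the bounds from more distant competitors $P_{m\pm j}$ with $|j|\geq 2$ are strictly weaker than the nearest-neighbor bounds from $P_{m\pm 1}$ and $e_2$ (a convexity-in-$j$ check using the Pisot growth), and ruling out any non-convergent lattice point as a competitor in this range of $p$, which uses the non-convex geometry-of-numbers bounds from the introduction.
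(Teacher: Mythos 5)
Your proposal is correct and follows essentially the same route as the paper: your quantity $B_k=q_k^p/(1-y_k^p)$ is exactly the paper's criterion \eqref{eq:t-def-C} for when the lattice point corresponding to $p_k/q_k$ reaches the boundary of the expanding ball $\Bp_t((0,1))$, and your transition value $p\approx(\log_\alpha 2)/m$ with a window of width $\asymp m^{-2}$ is precisely what the paper's Lemma~\ref{lemma:admissible} establishes via a two-term expansion in $1/m$ (the paper takes the exact tie $B_m=B_{m+1}$, i.e.\ \eqref{eq:p-def}, rather than an interior point of the interval, and the $A_k$ comparisons you introduce are superfluous for identifying $r_0/s_0$, since only the comparisons against $e_2$ determine the first point the algorithm selects). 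The one step you defer as a ``direct calculation'' --- that every competitor $k\neq m$ lies strictly outside the critical ball --- is the real content of the paper's argument and genuinely requires the second-order term because the first-order terms cancel, exactly as your width estimate anticipates.
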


The paper is organized as follows.
In Section~\ref{sec:mordell} we review Mordell's result on the geometry of numbers for the quasinorms $\Fp$.
Section~\ref{sec:algorithm} contains the algorithm that generates the $p$-continued fraction and the proof of Theorem~\ref{thm:main}.
Sections~\ref{sec:skipped} and \ref{sec:1-per} address the question of how many of the regular convergents can be skipped by the $p$-continued fraction; they contain the proofs of Theorems~\ref{thm:max-ell} and \ref{thm:1-per-big-skips}.

In Sections~\ref{sec:mordell}, \ref{sec:algorithm}, and \ref{sec:skipped}, the value of $p\in (0,1)$ is fixed, so in those sections we will usually suppress the dependence on $p$ from the notation.
In Section~\ref{sec:1-per}, the value of $p$ is allowed to vary; there, the notation will reflect the dependence on $p$.

\section{Geometry of numbers for non-convex bodies}
\label{sec:mordell}

Let $F=\Fp$, defined in \eqref{eq:fp-def}.
When $p\geq 1$, $F$ defines a norm on $\R^2$, but when $p\in (0,1)$, $F$ is not a norm because the triangle inequality does not hold.
However, for such $p$ we have the following quasi-triangle inequality, so $F$ defines a quasinorm.
\begin{lemma} Let $p\in (0,1)$. Then for any $(x_1,y_1), (x_2,y_2) \in \R^2$, we have
\begin{equation}
    F(x_1+x_2,y_1+y_2) \leq 2^{\frac 1p-1} \left(F(x_1,y_1) + F(x_2,y_2)\right).
\end{equation}
\end{lemma}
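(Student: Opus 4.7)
The plan is to deduce the quasi-triangle inequality from two successive applications of the concavity of $t \mapsto t^p$ on $[0,\infty)$, which holds precisely because $p \in (0,1)$. The first application handles the sum inside each coordinate; the second handles the passage from the sum of $p$-th powers of the two $F$-values to the sum of the $F$-values themselves, which is where the factor $2^{1/p-1}$ enters.

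First, I would establish the elementary inequality $(a+b)^p \leq a^p + b^p$ for $a,b \geq 0$, which follows from concavity of $t\mapsto t^p$: assuming $a > 0$ and dividing by $a^p$, it reduces to $(1+t)^p \leq 1 + t^p$ for $t = b/a \geq 0$, and the function $f(t) = 1 + t^p - (1+t)^p$ satisfies $f(0)=0$ with $f'(t) = p(t^{p-1} - (1+t)^{p-1}) \geq 0$ since $p-1 < 0$. Applying this separately to $|x_1+x_2|^p$ and $|y_1+y_2|^p$ and using $|x_1+x_2| \leq |x_1|+|x_2|$ gives
\[
F(x_1+x_2,y_1+y_2)^p \leq |x_1|^p + |x_2|^p + |y_1|^p + |y_2|^p = F(x_1,y_1)^p + F(x_2,y_2)^p.
\]

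Second, setting $A=F(x_1,y_1)$ and $B=F(x_2,y_2)$, I would finish by showing $(A^p+B^p)^{1/p} \leq 2^{1/p-1}(A+B)$. Raising to the $p$-th power, this is equivalent to $\tfrac12(A^p+B^p) \leq \bigl(\tfrac{A+B}{2}\bigr)^p$, which is exactly Jensen's inequality applied to the concave function $t\mapsto t^p$ evaluated at $A$ and $B$ with equal weights (equivalently, the power-mean comparison $M_p \leq M_1$ for $p<1$). Chaining this with the inequality from the previous paragraph yields the desired bound.

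The proof involves no real obstacle; the only subtlety worth noting is that the direction of Jensen's inequality is reversed compared to the convex case $p \geq 1$ (where one would obtain the ordinary triangle inequality with constant $1$), and this reversal is precisely what forces the appearance of the constant $2^{1/p-1} > 1$ in the quasi-triangle inequality.
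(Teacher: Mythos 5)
Your proposal is correct and follows essentially the same route as the paper: first the subadditivity $|a+b|^p\le |a|^p+|b|^p$ via the derivative of $t\mapsto 1+t^p-(1+t)^p$, then the power-mean step contributing the factor $2^{1/p-1}$ (the paper phrases this as convexity of $t\mapsto t^{1/p}$ applied to $A^p$ and $B^p$, which is the same inequality as your Jensen argument for the concave map $t\mapsto t^p$). No gaps.
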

\begin{proof}
We start by proving that $|a+b|^p \leq |a|^p + |b|^p$ for any $a,b\in \R$.
By homogeneity and the usual triangle inequality, it suffices to prove that $(1+t)^p \leq 1+t^p$ for $t\geq 0$, but this follows from the fact that the derivative of $t\mapsto (1+t)^p-t^p-1$ is nonpositive when $p<1$.
    
Using that $|a+b|^p \leq |a|^p + |b|^p$ we obtain
\begin{equation}
    F(x_1+x_2,y_1+y_2) \leq \left(F(x_1,y_1)^p + F(x_2,y_2)^p\right)^{\frac 1p}.
\end{equation}
Now since $t\mapsto t^{1/p}$ is convex we have $(\frac{a+b}{2})^{1/p} \leq \frac{a^{1/p}+b^{1/p}}{2}$, from which it follows that
\begin{equation}
    \left(F(x_1,y_1)^p + F(x_2,y_2)^p\right)^{\frac 1p} \leq 2^{\frac 1p-1} \left(F(x_1,y_1) + F(x_2,y_2)\right),
\end{equation}
as desired.
\end{proof}

Our starting point for studying best approximations with respect to the quasinorm $F$ is the following theorem of Mordell \cite[Section~8]{mordell}, which we have updated to reflect later work of Watson \cite{watson}.
If $L\subseteq \R^2$ is a full lattice, the determinant of $L$ equals $|\!\det g|$, where $g\in \operatorname{GL}_2(\R)$ is any matrix whose rows form a $\Z$-basis for $L$.

\begin{theorem}[Mordell and Watson] \label{thm:mordell}
    Let $p\in (0,1)$ and
    let $(a_p,b_p)$ be the unique solution to the equations
    \begin{align}
        (a+b)^p + (a-b)^p &= a^p + b^p, \label{eq:a-b-sys-1} \\
        a^2 + b^2 &= 2, \label{eq:a-b-sys-2}
    \end{align}
    with $a_p>b_p$.
    Define $c_p$ by
    \begin{equation} \label{eq:cp-def}
        c_p = 2^{-\frac 1p} F(a_p,b_p).
    \end{equation}
    Then every full lattice has a point $(x,y)\neq (0,0)$ satisfying
    \begin{equation} \label{eq:mordell-ineq}
        F(x,y) \leq 2^{\frac 1p-\frac 12} (\det L)^{\frac 12} c_p.
    \end{equation}
    Furthermore, for $p\in (0.3295\ldots, 1)$ the inequality is sharp.
\end{theorem}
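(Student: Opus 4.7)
The plan is to prove this Minkowski-type bound by a basis-reduction argument rather than by the volume/pigeonhole approach, which fails because $\mathcal B^{\langle p \rangle}$ is not convex when $p<1$. By the homogeneity $F(tx,ty)=|t|F(x,y)$, it suffices to handle $\det L=1$ and exhibit a nonzero $(x,y)\in L$ with $F(x,y)\leq 2^{1/p-1/2}c_p$. Since $F$ is continuous and proper and $L$ is discrete, $F$ attains its minimum on $L\setminus\{0\}$ at some $u_1=(x_1,y_1)$. Then pick $u_2=(x_2,y_2)\in L$, linearly independent of $u_1$, minimizing $F(u_2)$ among such vectors; standard lattice arguments show that $\{u_1,u_2\}$ is a basis of $L$, and one has both $F(u_2)\geq F(u_1)$ and $F(u_2\pm u_1)\geq F(u_2)$, since otherwise $u_2\pm u_1$ would contradict the minimality in the choice of $u_2$.

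Next, exploit the symmetries $F(\pm x,\pm y)=F(y,x)=F(x,y)$: after applying a signed permutation to the coordinates and possibly replacing $u_2$ by $-u_2$, arrange that $u_1=(a,b)$ with $a\geq b\geq 0$, with $|ad-bc|=1$ where $u_2=(c,d)$. Consider now the constrained optimization problem of minimizing $F(u_1)^p=a^p+b^p$ subject to the determinant constraint together with the three minimality inequalities. A continuity-plus-boundary argument, combined with a case analysis ruling out degenerate extrema in which $u_1$ or $u_2$ lies on a coordinate axis, shows that the infimum is attained at a lattice for which $F(u_1)=F(u_2)=F(u_1+u_2)=F(u_1-u_2)$. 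A short algebraic manipulation then forces, up to the symmetries already used, $u_2=(-b,a)$, so that $1=\det L=a^2+b^2$ and the equality $F(u_1+u_2)=F(u_1)$ becomes $(a+b)^p+(a-b)^p=a^p+b^p$. Comparing with \eqref{eq:a-b-sys-1}--\eqref{eq:a-b-sys-2}, one reads off $(a,b)=(a_p/\sqrt 2,b_p/\sqrt 2)$, whence
\[
F(u_1)=(a^p+b^p)^{1/p}=2^{-1/2}F(a_p,b_p)=2^{-1/2}\cdot 2^{1/p}c_p=2^{1/p-1/2}c_p,
\]
which is the desired bound. Sharpness for $p\in(0.3295\ldots,1)$ then follows by exhibiting the critical lattice just constructed and verifying, using \eqref{eq:a-b-sys-1}--\eqref{eq:a-b-sys-2} and a direct examination of its short vectors, that no nonzero point has $F$-value strictly below $2^{1/p-1/2}c_p$.

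The main obstacle will be the reduction of the optimization problem in the second paragraph: for $p\geq 1$ a convexity argument shows immediately that the minimum is attained with all three minimality constraints active, but for $p\in(0,1)$ the level curves of $F$ have concave arcs and convexity is unavailable. One must therefore run a Lagrange-multiplier/KKT analysis on the non-convex feasible region and carefully rule out the "skew" configurations with $c=0$ or $d=0$ and the case $b=0$, which is exactly the content of Mordell's argument in \cite{mordell} as refined by Watson in \cite{watson} (the threshold $p\approx 0.3295$ marks the value below which a different, non-symmetric lattice becomes critical, so the bound $c_p$ is no longer the true critical constant even though the inequality \eqref{eq:mordell-ineq} still holds).
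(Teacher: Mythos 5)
You should first note that the paper does not prove this statement at all: Theorem~\ref{thm:mordell} is imported verbatim from Mordell (Section~8 of \cite{mordell}) as updated by Watson \cite{watson}, and the authors use it as a black box. So there is no internal proof to match; what you have written is an attempt to reconstruct Mordell's critical-lattice argument. The skeleton you describe is indeed the right one, and your closing arithmetic is correct: the lattice generated by $2^{-1/2}(a_p,b_p)$ and $2^{-1/2}(-b_p,a_p)$ has determinant $\tfrac12(a_p^2+b_p^2)=1$ by \eqref{eq:a-b-sys-2}, its four shortest candidate vectors $u_1,u_2,u_1\pm u_2$ all have $F$-value $2^{-1/2}F(a_p,b_p)=2^{1/p-1/2}c_p$ precisely because of \eqref{eq:a-b-sys-1}, and this is the claimed constant in \eqref{eq:mordell-ineq}.

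However, the proposal has genuine gaps exactly at the points where the non-convexity bites. First, the reduction to a basis $\{u_1,u_2\}$ realizing the two successive minima is ``standard'' only for convex distance functions; for a quasinorm the usual argument (replace $u_2$ by $u_2-\lfloor\theta\rceil u_1$ and control $F$ of the result) uses the triangle inequality and must be redone. Second, and more seriously, the central claim --- that the extremum of $F(u_1)$ over determinant-one lattices is attained where $F(u_1)=F(u_2)=F(u_1+u_2)=F(u_1-u_2)$ and that this forces $u_2=(-b,a)$ --- is asserted rather than proved, and as stated it cannot hold uniformly in $p\in(0,1)$: if it did, your argument would establish sharpness for all $p$, contradicting the theorem's own restriction to $p\in(0.3295\ldots,1)$. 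For small $p$ the level sets of $F$ are so strongly non-convex that lattice points such as $2u_1\pm u_2$ or $u_1\pm 2u_2$ can enter the ball even when $u_1,u_2,u_1\pm u_2$ do not, so the extremal configuration need not be the symmetric one; ruling this in or out is the actual content of Mordell's case analysis and Watson's refinement, not a ``short algebraic manipulation.'' (Relatedly, the optimization should be a \emph{maximization} of $F(u_1)$ over determinant-one lattices, not a minimization as written, and the sharpness claim requires verifying admissibility of the critical lattice against \emph{all} of its nonzero points, which you defer entirely.) As a citation-level summary of the Mordell--Watson strategy the proposal is serviceable, but as a proof it is missing its core.
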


Note that the maximum value of the function $(a,b)\mapsto a^p + b^p$ subject to the constraint $a^2+b^2=2$ is $2$, and this occurs only when $a=b=1$.
Thus $c_p<1$ for all $p<1$.
Also, clearly $a_p$ and $b_p$ both approach $1$ as $p$ tends to $1$, so we have
\begin{equation} \label{eq:cp-limit-1}
    \lim_{p\to 1^-}c_p = 1.
\end{equation}

Applying Theorem~\ref{thm:mordell} to the lattice $(0,t)\Z + (t^{-1},-\alpha t)\Z$,
we find that for every $t\geq 1$ there exists a rational $r/s$ with $s>0$ such that
\begin{equation} \label{eq:Ft-leq-cp2}
    F_t(s,r-s\a) \leq 2^{\frac 1p-\frac12} c_p.
\end{equation}
Define
\begin{equation} \label{eq:beta-cp}
    \beta_p = 2^{\frac 2p-1}c_p^2.
\end{equation}
Following Mordell, if we let $j_p = b_p/a_p<1$ then
\begin{equation} \label{eq:ap-bp-jp}
    a_p = \sqrt{\frac{2}{j_p^2+1}} \quad \text{ and } \quad b_p = j_p\sqrt{\frac{2}{j_p^2+1}}
\end{equation}
and from this we find that
\begin{equation} \label{eq:cp-beta}
    \beta_p = \frac{(1+j_p^p)^{2/p}}{1+j_p^2}.
\end{equation}

Applying the inequality between arithmetic and geometric means and \eqref{eq:Ft-leq-cp2}--\eqref{eq:cp-beta}, we find that for each $t\geq 1$ there exists a rational $r/s$ with $s>0$ such that
\begin{equation} \label{eq:am-gm}
    s\abs{r-s\a} \leq 4^{-\frac 1p}\left[F_t(s,r-s\a)\right]^2 \leq 4^{-\frac{1}{p}}\beta_p,
\end{equation}
where equality holds for the first inequality if and only if $s=t^2\abs{r-s\alpha}$.
Letting $t\to\infty$ we find that 
\begin{equation} \label{eq:am-gm-beta}
    s\abs{r-s\a} < 4^{-\frac 1p} \beta_p
\end{equation}
for infinitely many rational approximations $r/s$.
The proposition below describes the values of $4^{-1/p}\beta_p$ for $p\in (0,1)$.

\begin{proposition} \label{thm:beta}
    Let $p\in (0,1)$.
    Then
    \begin{enumerate}
        \item $\lim\limits_{p\to 1^-} 4^{-1/p}\beta_p = \frac 12$,
        \item $\lim\limits_{p\to 0^+} 4^{-1/p}\beta_p = \frac 1{\sqrt 5}$, and
        \item the function $p\mapsto 4^{-1/p}\beta_p$ is increasing on $(0,1)$.
    \end{enumerate}
\end{proposition}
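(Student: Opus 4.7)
My plan is to work throughout with the single equation for $j_p$ obtained from \eqref{eq:a-b-sys-1} by dividing by $a_p^p$ and applying \eqref{eq:ap-bp-jp}, namely
\begin{equation*}
    (1+j)^p + (1-j)^p = 1 + j^p, \qquad j\in(0,1),
\end{equation*}
which by the implicit function theorem defines $j_p$ as a smooth function on $(0,1)$. Writing $f(p) := 4^{-1/p}\beta_p = ((1+j_p^p)/2)^{2/p}/(1+j_p^2)$ via \eqref{eq:cp-beta}, I will compute the two endpoint limits and then establish monotonicity.

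At $p=1$ the defining equation collapses to $2 = 1+j$, so $j_p\to 1$ as $p\to 1^-$; plugging into \eqref{eq:cp-beta} gives $\beta_p\to 2$, hence part (1). For part (2), expanding $(1\pm j)^p = 1 + p\log(1\pm j) + O(p^2)$ in the defining equation and letting $p\to 0^+$ after division by $p$ yields $\log(1-j_0^2) = \log j_0$, so $j_0 = 1/\varphi$. Then $j_p^p = 1 - p\log\varphi + o(p)$ gives $((1+j_p^p)/2)^{2/p}\to e^{-\log\varphi} = 1/\varphi$ while $1+j_p^2\to 1+1/\varphi^2$, so $f(p)\to \varphi/(\varphi^2+1)$; using $\varphi^2 = \varphi+1$ this simplifies to $1/\sqrt 5$.

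For part (3), I would first rewrite $f(p) = \bigl(M_p(1+j_p, 1-j_p)/M_2(1+j_p, 1-j_p)\bigr)^2$, where $M_p$ denotes the power mean; this follows from the defining equation (which identifies $(1+j_p^p)/2$ with $((1+j_p)^p + (1-j_p)^p)/2$) together with $1+j_p^2 = ((1+j_p)^2 + (1-j_p)^2)/2$. Differentiating $\log f$ via the chain rule and computing $j_p'$ by implicit differentiation of $H(p,j) := (1+j)^p + (1-j)^p - 1 - j^p = 0$ reduces the positivity of $(d/dp)\log f$ to an explicit inequality in $(1\pm j_p)^p$, $j_p^p$, and their logarithms.

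The main obstacle will be verifying this inequality. The two effects compete: for fixed arguments the power-mean inequality gives $\partial_p(M_p/M_2) > 0$, but numerics suggest $j_p' > 0$ and $\partial_j(M_p/M_2) > 0$, so the chain-rule term enters with the opposite sign. I would clear denominators, use the defining equation to eliminate $(1-j_p)^p$ in favor of $1 + j_p^p - (1+j_p)^p$, and verify the resulting one-variable inequality on $j_p \in (1/\varphi, 1)$ by a monotonicity or convexity argument, splitting if necessary at an intermediate value of $p$. The computed endpoint values $f(0^+) = 1/\sqrt 5 < \tfrac 12 = f(1^-)$ serve as a consistency check.
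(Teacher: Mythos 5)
Parts (1) and (2) of your proposal are correct and essentially match the paper's computation: the paper also works with $j_p=b_p/a_p$ and the identity $4^{-1/p}\beta_p=\bigl((1+j_p^p)/2\bigr)^{2/p}/(1+j_p^2)$, using $j_p\to 1$ as $p\to 1^-$ and $j_p\to\frac{-1+\sqrt5}{2}=1/\varphi$ as $p\to 0^+$. The latter limit is Lemma~\ref{lem:jp}, which the paper cites from Watson; your expansion of the defining equation re-derives it, which is fine modulo a word on why $j_p$ stays bounded away from $0$ so that the $O(p^2)$ error in $j^p=1+p\log j+O(p^2)$ is uniform along the relevant subsequence.

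The genuine gap is in part (3), which is the substance of the proposition: you reduce monotonicity to ``an explicit inequality'' and then offer only a plan for verifying it (clear denominators, eliminate $(1-j_p)^p$, check a one-variable inequality ``by a monotonicity or convexity argument, splitting if necessary''). That inequality is precisely the hard step, and your chosen power-mean decomposition makes it harder than necessary: as you observe yourself, in that form the $\partial_p$ term and the $j_p'\,\partial_j$ term have opposite signs, so you must beat one against the other quantitatively, and you never do. The paper sidesteps the competition by first substituting the defining relation $(1+j)^p+(1-j)^p=1+j^p$ into $f$ and only then differentiating $\log f$, obtaining
\begin{equation}
    f'(p) = \frac{2}{p^2}\left(\log 2 - \log(1+j_p^p) - \frac{j_p^p\log(1/j_p^p)}{1+j_p^p}\right) + 2j_p'\left(\frac{j_p^{p-1}}{1+j_p^p}-\frac{j_p}{1+j_p^2}\right),
\end{equation}
in which \emph{both} summands are positive: the first because $g(x)=\log(1+x)+\frac{x\log(1/x)}{1+x}$ is increasing on $(0,1)$ with limit $\log 2$ at $1$, and the second because $j_p'>0$ together with $1+j_p^p<j_p^{p-2}(1+j_p^2)$. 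Note also that $j_p'>0$ is itself a nontrivial input (the paper outsources it to Watson, describing the proof as elementary but tedious), so your plan to get it by implicit differentiation of $H(p,j)=0$ would require signing $\partial_pH$ and $\partial_jH$, another step you have not carried out. As written, part (3) is a strategy, not a proof.
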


\begin{figure}
    \centering
    \includegraphics{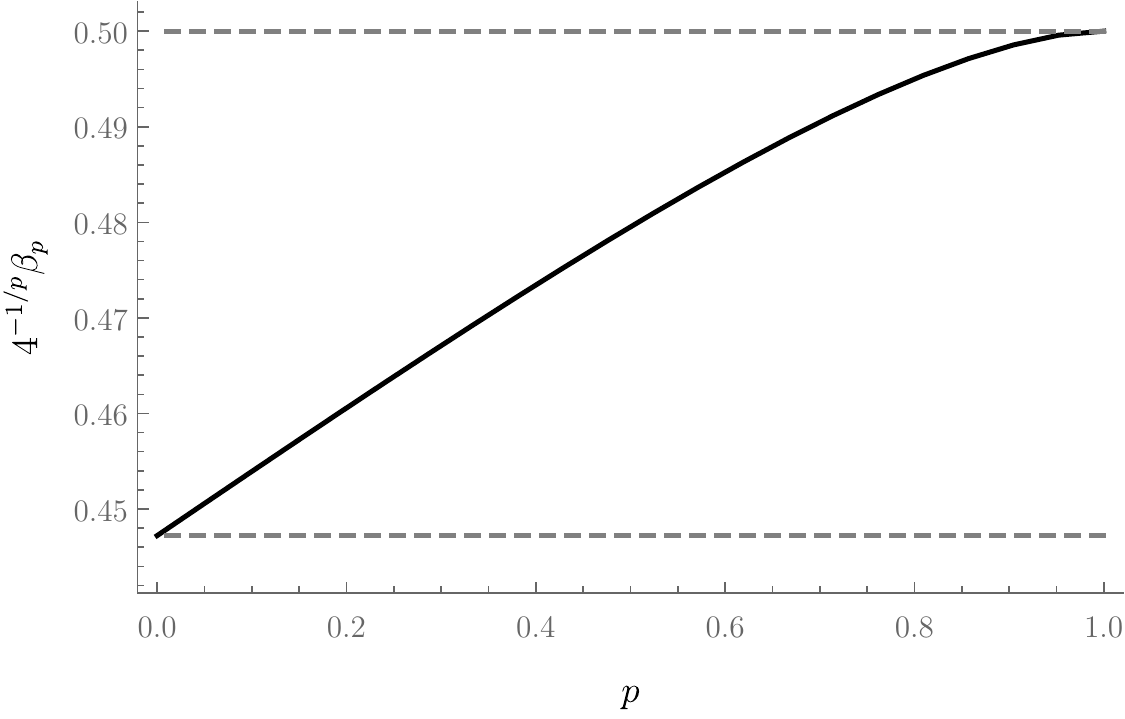}
    \caption{A plot of $4^{-1/p} \beta_p$ for $p \in (0,1)$, with the limiting values of $\frac{1}{2}$ and $\frac{1}{\sqrt{5}}$ shown as dashed lines.}
    \label{fig:beta_p}
\end{figure}

In order to study the behavior of $\beta_p$ it is apparent that we must understand the behavior of $j_p$.
The next lemma follows from Section~2 of \cite{watson}; its proof is elementary but quite tedious.

\begin{lemma} \label{lem:jp}
The function $p\mapsto j_p$ is differentiable and strictly increasing on $(0,1)$ and
\begin{equation} \label{eq:jp-limit}
    \lim_{p\to 0^+} j_p = \mfrac{-1+\sqrt 5}{2}.
\end{equation}
\end{lemma}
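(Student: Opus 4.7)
The plan is to reduce the system defining $(a_p,b_p)$ to a single implicit equation in $j=b/a$, then apply the implicit function theorem and analyze the resulting implicit curve. By \eqref{eq:ap-bp-jp}, the pair $(a_p,b_p)$ is completely determined by $j\in(0,1)$, and substituting into \eqref{eq:a-b-sys-1} and dividing by $a_p^p$ gives the equivalent equation
\[
h(j, p) := (1+j)^p + (1-j)^p - 1 - j^p = 0, \qquad j \in (0,1).
\]
Since $p \in (0,1)$ makes $t \mapsto t^{p-1}$ strictly decreasing on $(0,\infty)$, and $1+j>1-j$, we have
\[
\partial_j h(j,p) = p\bigl[(1+j)^{p-1} - (1-j)^{p-1} - j^{p-1}\bigr] < 0
\]
on $(0,1)$. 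Combined with $\lim_{j\to 0^+} h(j,p) = 1 > 0$ and $\lim_{j\to 1^-} h(j,p) = 2^p - 2 < 0$, the intermediate value theorem yields a unique root $j_p$; the implicit function theorem then provides the claimed differentiability of $p \mapsto j_p$. A useful a priori lower bound comes from setting $\varphi^{-1} = \tfrac{-1+\sqrt 5}{2}$ and using $1+\varphi^{-1}=\varphi$ and $1-\varphi^{-1}=\varphi^{-2}$: writing $u=\varphi^{-p}$,
\[
h(\varphi^{-1}, p) = u^{-1} + u^2 - 1 - u = u^{-1}(1-u)^2(1+u) > 0,
\]
so $j_p > \varphi^{-1}$ for all $p\in(0,1)$.

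The main obstacle is strict monotonicity. Implicit differentiation gives $dj_p/dp = -\partial_p h/\partial_j h$ at $(j_p,p)$; since $\partial_j h < 0$, the sign of $dj_p/dp$ matches that of
\[
N(j,p) := (1+j)^p \ln(1+j) + (1-j)^p \ln(1-j) - j^p \ln j.
\]
I would eliminate $j_p^p$ via the defining relation $h(j_p,p)=0$, parameterize the locus $\{h=0\}$ by $j$ so that $p=p(j)$ becomes a smooth function on $(\varphi^{-1},1)$, and then reduce the desired inequality $N(j_p,p)>0$ to a one-variable claim verifiable through analysis of endpoint behavior (as $j\to\varphi^{-1}$, where $p\to 0^+$, and as $j\to 1$, where $p\to 1^-$) combined with controlled bounds on auxiliary derivatives. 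This is the elementary but tedious argument sketched in \cite[Section~2]{watson}.

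Granted monotonicity, the limit follows easily. As $p \to 0^+$, $j_p$ decreases monotonically to some $j^* \geq \varphi^{-1}$, and since $j_p \leq j_{p_0} < 1$ for any fixed $p_0 \in (0,1)$, the family $\{j_p\}$ is confined to a compact subset of $(0,1)$ as $p\to 0^+$. Applying the uniform Taylor expansion $x^p = 1 + p\ln x + O(p^2)$ to $h(j_p,p) = 0$ gives
\[
0 = h(j_p,p) = p\,\ln\!\frac{1-j_p^2}{j_p} + O(p^2),
\]
so dividing by $p$ and sending $p \to 0^+$ forces $(1-(j^*)^2)/j^* = 1$, whence $j^* = \varphi^{-1}$, as claimed.
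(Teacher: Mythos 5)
The parts you actually prove are correct, and they go beyond what the paper itself offers: the paper gives no proof of this lemma at all, deferring it wholesale to Section~2 of \cite{watson} with the remark that the argument is ``elementary but quite tedious.'' Your reduction of \eqref{eq:a-b-sys-1}--\eqref{eq:a-b-sys-2} to the single equation $h(j,p)=(1+j)^p+(1-j)^p-1-j^p=0$ is right, as are the facts $\partial_j h<0$ on $(0,1)$ (since $t\mapsto t^{p-1}$ is decreasing), $h\to 1$ as $j\to 0^+$, $h\to 2^p-2<0$ as $j\to 1^-$, the existence, uniqueness and differentiability of $j_p$ via the implicit function theorem, the lower bound $j_p>\varphi^{-1}$ from the factorization $u^{-1}+u^2-1-u=u^{-1}(1-u)^2(1+u)$ with $u=\varphi^{-p}$, and the limit computation from the uniform expansion $x^p=1+p\log x+O(p^2)$, which correctly forces $(1-(j^*)^2)/j^*=1$ and hence \eqref{eq:jp-limit}. (A small remark: monotonicity is not really needed there; for any fixed $j_1\in(\varphi^{-1},1)$ the same expansion gives $h(j_1,p)<0$ for small $p$, so $j_p<j_1$, which supplies the compactness you want.)

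The genuine gap is strict monotonicity, which is the substantive claim the paper is outsourcing to Watson. You correctly reduce it to showing $N(j_p,p)=\partial_p h(j_p,p)>0$, but the ensuing paragraph (``eliminate $j_p^p$, parameterize the locus, reduce to a one-variable claim verifiable through endpoint behavior and controlled bounds on auxiliary derivatives'') is a plan, not an argument, and the plan understates the difficulty: the delicate regime is exactly the corner $p\to 0^+$, where $j_p\to\varphi^{-1}$ and $N(j_p,p)\to\log\bigl((1-\varphi^{-2})/\varphi^{-1}\bigr)=\log 1=0$, so the sign of $N$ there is not decided by endpoint behavior and requires a genuine higher-order analysis. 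That analysis is precisely the tedious content of \cite[Section~2]{watson} that the paper cites. So as a self-contained proof your proposal is incomplete at the monotonicity step; as a citation-plus-supplement it matches the paper's treatment (which proves nothing directly) and adds clean, correct proofs of differentiability, the bound $j_p>\varphi^{-1}$, and the limit.
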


Using Lemma~\ref{lem:jp} the proof of Proposition~\ref{thm:beta} is relatively straightforward.

\begin{proof}[Proof of Proposition~\ref{thm:beta}]
(1) By \eqref{eq:beta-cp} and \eqref{eq:cp-limit-1} we have
    \begin{equation}
        \lim_{p\to 1^-} 4^{-\frac 1p}\beta_p = \lim_{p\to 1^-}\mfrac 12c_p^2 = \mfrac 12.
    \end{equation}
    
(2) As $p$ tends to zero we have
\begin{equation}
    4^{-\frac 1p} \frac{(1+x^p)^{2/p}}{1+x^2} = \frac{1}{x^{-1}+x} + O(p)
\end{equation}
uniformly for $x\in [\frac 12,1]$.
By \eqref{eq:jp-limit} we compute
\begin{equation}
    \lim_{p\to 0^+} 4^{-\frac1p}\beta_p = \lim_{p\to 0^+} 4^{-\frac 1p} \frac{(1+j_p^p)^{2/p}}{1+j_p^2} = \frac{1}{2\lrp{-1+\sqrt 5}^{-1} + \frac{1}{2}\lrp{-1+\sqrt 5}} = \frac{1}{\sqrt 5}.
\end{equation}

(3) We will prove that the derivative of the function $p\mapsto 4^{-1/p}\beta_p$ is positive.
Since $\beta_p>0$ it is enough to prove that $f'(p)>0$, where
\begin{equation}
    f(p) = \log(4^{-1/p}\beta_p) = -\frac{2\log 2}{p} + \frac{2\log(1+j_p^p)}{p} - \log(1+j_p^2).
\end{equation}
A straightforward computation yields
\begin{equation}
    f'(p) = \frac{2}{p^2} \left( \log 2 - \log(1+j_p^p) - \frac{j_p^p \log(1/j_p^p)}{1+j_p^p}\right) + 2j_p'\left(\frac{j_p^{p-1}}{1+j_p^p} - \frac{j_p}{1+j_p^2}\right).
\end{equation}
By Lemma~\ref{lem:jp} we have $j_p'>0$.
Since $j_p\in (0,1)$ and $p\in (0,1)$ we have $j_p^{p-2}>1$ so
\begin{equation}
    1+j_p^p < j_p^{p-2} + j_p^p = j_p^{p-2}(1+j_p^2),
\end{equation}
which implies
\begin{equation}
    \frac{j_p^{p-1}}{1+j_p^p} - \frac{j_p}{1+j_p^2}>0.
\end{equation}
Thus to show that $f'(p)>0$ it suffices to prove that $g(j_p^p) < \log 2$, where
\begin{equation}
    g(x) := \log(1+x) + \frac{x \log(1/x)}{1+x}.
\end{equation}
Indeed, we have $g(x) < \log 2$ for any $x\in (0,1)$ because 
\begin{align}
    g'(x) = \frac{\log(1/x)}{(1+x)^2}>0,
\end{align}
and $\lim\limits_{x\to 1^-}g(x) = \log 2$.
\end{proof}

\section{The \texorpdfstring{$p$}{p}-continued fraction for \texorpdfstring{$p\in (0,1)$}{p in (0,1)}}
\label{sec:algorithm}

In this section we prove Theorem~\ref{thm:main}. Fix $p \in (0, 1)$.
We will need the following analogue of Minkowski's first convex body theorem for the non-convex bodies
\begin{equation}
    \mathcal B_t(P) = \left\{ P'\in \R^2 : F_t(P') < F_t(P) \right\},
\end{equation}
where $P\in \R^2$ and $t>0$.
This is essentially an immediate corollary of Mordell and Watson's Theorem~\ref{thm:mordell}.
We say that a lattice is admissible for a set $S$ if the only lattice point inside $S$ is the origin.

\begin{proposition}\label{prop:max-area}
    Fix $p\in (0,1)$, $t \geq 1$, and $P\in \R^2$.
    If $L$ is an admissible lattice for $\mathcal B_t(P)$ then
    \begin{equation}
        \operatorname{area} \mathcal B_t(P) \leq C_p \operatorname{det} L,
    \end{equation}
    where
    \begin{equation}
        C_p = 2^{\frac 2p+1}\frac{\Gamma\big(1+\frac 1p\big)^2}{\Gamma\big(1+\frac 2p\big)} c_p^2
    \end{equation}
    and $c_p$ is given in \eqref{eq:cp-def}.
\end{proposition}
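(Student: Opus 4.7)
The plan is to reduce the claim to a direct application of Theorem~\ref{thm:mordell} by (i) replacing $\mathcal{B}_t(P)$ with an $F$-ball via a volume-preserving linear change of coordinates, (ii) computing the area of an $F$-ball in closed form using a Beta-integral, and (iii) bounding the radius of that ball in terms of $\det L$ using the Mordell--Watson inequality.

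First I would observe that the linear map $T\colon (x,y)\mapsto (t^{-1}x,ty)$ has determinant $1$, so it is area-preserving and satisfies $\det T(L)=\det L$. Because $F_t(P')=F(T(P'))$ by definition, the image $T(\mathcal{B}_t(P))$ is precisely the open $F$-ball of radius $R:=F_t(P)$, and $T(L)$ is admissible for it. Thus it suffices to bound the area of $\{(u,v):F(u,v)<R\}$ in terms of $\det T(L)=\det L$.

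Next I would evaluate the area of the unit $F$-ball. Using symmetry and the substitution $u=x^p$,
\begin{equation}
\operatorname{area}\{F<1\}=4\int_0^1(1-x^p)^{1/p}\,dx=\frac{4}{p}B\!\left(\tfrac{1}{p},\tfrac{1}{p}+1\right)=4\,\frac{\Gamma(1+\tfrac{1}{p})^2}{\Gamma(1+\tfrac{2}{p})},
\end{equation}
after simplifying with $\Gamma(1/p)=p\,\Gamma(1+1/p)$. Scaling homogeneously by $R$ multiplies the area by $R^2$, so
\begin{equation}
\operatorname{area}\mathcal{B}_t(P)=4\,R^2\,\frac{\Gamma(1+\tfrac{1}{p})^2}{\Gamma(1+\tfrac{2}{p})}.
\end{equation}

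Finally, since $T(L)$ is admissible for the open $F$-ball of radius $R$, every nonzero point $Q\in T(L)$ satisfies $F(Q)\ge R$. But Theorem~\ref{thm:mordell} guarantees some nonzero $Q\in T(L)$ with $F(Q)\le 2^{1/p-1/2}(\det L)^{1/2}c_p$, whence
\begin{equation}
R^2\le 2^{2/p-1}\,c_p^2\,\det L.
\end{equation}
Multiplying by $4\,\Gamma(1+1/p)^2/\Gamma(1+2/p)$ produces exactly the constant $C_p$ in the statement, completing the proof. The only mildly delicate step is the Beta-integral computation of the $F$-ball area; everything else is bookkeeping and a clean invocation of Theorem~\ref{thm:mordell}.
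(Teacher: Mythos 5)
Your proposal is correct and follows essentially the same route as the paper: bound $R=F_t(P)$ from above by combining admissibility with the Mordell--Watson theorem, and compute $\operatorname{area}\mathcal B_t(P)=4R^2\,\Gamma(1+\tfrac1p)^2/\Gamma(1+\tfrac2p)$ via the Beta integral. Your explicit use of the determinant-one map $T\colon(x,y)\mapsto(t^{-1}x,ty)$ merely makes precise a step the paper leaves implicit when it applies Theorem~\ref{thm:mordell} with $F$ in place of $F_t$.
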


\begin{proof}
Let $R=F_t(P)$.
By Theorem~\ref{thm:mordell}, there is a point $(x,y)\in L$ that satisfies \eqref{eq:mordell-ineq}.
But this point is not in $\mathcal B_t(P)$ since $L$ is admissible for $\mathcal B_t(P)$.
Thus
\begin{equation} \label{eq:R-ineq}
    R \leq F(x,y) \leq  2^{\frac 1p-\frac 12} c_p (\operatorname{det} L)^{\frac 12} .
\end{equation}
The area of $\mathcal B_t(P)$ is given by
\begin{equation}
    \operatorname{area} \mathcal B_t(P) = 4\int_0^{R} (R^p-x^p)^{\frac1p} \, dx = 4R^2 \int_0^1 (1-x^p)^{\frac1p} \, dx = 4R^2 \frac{\Gamma\big(1+\frac1p\big)^2}{\Gamma\big(1+\frac2p\big)}.
\end{equation}
From this computation and \eqref{eq:R-ineq} we have
\begin{equation}
    \operatorname{area} \mathcal B_t(P) \leq 2^{\frac 2p+1}\frac{\Gamma\big( 1+\frac 1p \big)^2}{\Gamma\big(1+\frac 2p\big)} c_p^2 (\operatorname{det} L),
\end{equation}
as desired.
\end{proof}

We will generate the convergents of the $p$-continued fraction using a recursive algorithm, described in Lemma \ref{lem:algorithm}, that yields a sequence of points in the lattice
\[
    L_\alpha = (0,1)\Z+(1,-\alpha)\Z,
\]
where a point $(s,r-s\a)\in L_\alpha$ corresponds to a rational $r/s$.
Of course we can assume without loss of generality that $\alpha\in (-\frac 12,\frac 12)$.
The properties of the algorithm will show that the sequence of convergents is, in fact, convergent and is precisely the sequence of best approximations with respect to $F$, ordered by increasing denominator.

We will use this elementary lemma several times.

\begin{lemma}\label{lem:lower-norm}
    Fix $p \in (0,1)$.
    If $\abs{x'} \leq \abs{x}$ and $\abs{y'} \leq \abs{y}$, then $F(x', y') \leq F(x, y)$.
\end{lemma}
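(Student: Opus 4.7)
The plan is to unpack the definition of $F$ and use monotonicity of the power function twice. Since $p>0$, the map $t\mapsto t^p$ is monotonically increasing on $[0,\infty)$, so from $|x'|\leq |x|$ and $|y'|\leq |y|$ we immediately get $|x'|^p\leq |x|^p$ and $|y'|^p\leq |y|^p$. Adding these inequalities yields $|x'|^p+|y'|^p\leq |x|^p+|y|^p$.

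Next I would apply the map $t\mapsto t^{1/p}$, which is also monotonically increasing on $[0,\infty)$ since $1/p>0$, to both sides. By the definition \eqref{eq:fp-def} of $F=\Fp$, this gives
\begin{equation}
F(x',y')=\bigl(|x'|^p+|y'|^p\bigr)^{1/p}\leq \bigl(|x|^p+|y|^p\bigr)^{1/p}=F(x,y),
\end{equation}
which is the desired inequality.

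There is no substantive obstacle here; the statement is purely a coordinatewise monotonicity property of the $\ell^p$ expression, and the argument requires nothing beyond the elementary fact that $t\mapsto t^q$ is increasing on $[0,\infty)$ whenever $q>0$. (The restriction $p\in(0,1)$ in the hypothesis is not actually needed for this step; the lemma holds for any $p>0$, but it is stated in the range relevant to the rest of the paper.)
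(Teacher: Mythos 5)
Your argument is correct and is exactly the elementary monotonicity argument the authors had in mind; the paper omits the proof entirely, labeling the lemma as elementary. Your parenthetical observation that the restriction $p\in(0,1)$ is not needed is also accurate.
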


We also have the following property of $\mathcal{B}_t(P)$ as $t$ varies. Roughly speaking, this lemma shows that in the first quadrant, $\mathcal B_t(P)$ contracts to the left of $P$ and expands to the right of $P$ as $t$ increases.

\begin{lemma}\label{lemma:incr-decr-t}
    Fix $p\in (0,1)$ and $P = (x_P, y_P)\in \R^2$ with $x_P \geq 0$, and let $t_0, t_1 \in \R$ with $t_1 > t_0 > 0$. 
    Let $x>0$ and for $j\in \{0,1\}$ define $y_j>0$ by $F_{t_j}(x,y_j)=F_{t_j}(P)$, when a solution to this equation exists. We have that
    \begin{enumerate}
        \item if $x<x_P$, then $y_0>y_1$, and
        \item if $x>x_P$, then $y_0<y_1$.
    \end{enumerate}
\end{lemma}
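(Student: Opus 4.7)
The plan is to raise the defining identity $F_{t_j}(x,y_j) = F_{t_j}(P)$ to the $p$-th power, which removes the awkward outer $1/p$-exponent in the quasinorm and turns the level-set equation into something linear in $y_j^p$. Using $F_t(u,v)^p = t^{-p}|u|^p + t^p|v|^p$ together with the nonnegativity of $x$, $x_P$, $y_P$, and $y_j$, the defining equation becomes
\[ t_j^{-p}x^p + t_j^p y_j^p \;=\; t_j^{-p}x_P^p + t_j^p y_P^p. \]
Dividing by $t_j^p$ and solving for $y_j^p$ yields the clean formula
\[ y_j^p \;=\; y_P^p + t_j^{-2p}\bigl(x_P^p - x^p\bigr). \]

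The monotonicity claim is then immediate. The right-hand side is an affine function of $t_j^{-2p}$ with slope $x_P^p - x^p$, and since $t \mapsto t^{-2p}$ is strictly decreasing on $(0,\infty)$ the hypothesis $t_1 > t_0$ gives $t_1^{-2p} < t_0^{-2p}$. In case~(1), $x < x_P$ makes the slope positive, so $y_1^p < y_0^p$; as both quantities are positive, taking $p$-th roots preserves the inequality and I conclude $y_0 > y_1$. In case~(2), $x > x_P$ makes the slope negative, which reverses the inequality and gives $y_0 < y_1$.

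I do not foresee any substantive obstacle: the argument is essentially algebraic once one takes $p$-th powers and isolates $y_j^p$. The only mild subtlety is ensuring that $y_P^p + t_j^{-2p}(x_P^p - x^p)$ is positive, so that its positive $p$-th root defines a real $y_j$. This is automatic in case~(1), and in case~(2) it is built into the lemma's standing hypothesis that both $y_0$ and $y_1$ exist; in fact, in case~(2) existence at $t_0$ already forces existence at the larger $t_1$, since increasing $t$ only increases $y_j^p$. Geometrically, both level curves pass through $P$, and raising $t$ tilts the curve of constant $F_t$-value so that it drops to the left of $P$ and rises to the right of $P$, which is precisely what the lemma formalizes.
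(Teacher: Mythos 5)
Your argument is correct and is essentially the paper's own proof: both solve the level-set equation for $y_j^p$ and observe that $y_0^p - y_1^p = (t_0^{-2p}-t_1^{-2p})(x_P^p - x^p)$, reading off the sign from $t_1>t_0$ and the comparison of $x$ with $x_P$. The only cosmetic slip is writing $y_P^p$ where the lemma does not assume $y_P\geq 0$ (the paper uses $\abs{y_P}^p$), but that term cancels in the difference, so nothing is affected.
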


\begin{proof}

From the implicit definition of $y_j$, we find \[y_j = t_j^{-2}\left(x_P^p - x^p + t_j^{2p}\abs{y_P}^p\right)^\frac{1}{p}\] as an explicit formula for $y_j$. 

For algebraic ease, we consider the quantity $y_0^p-y_1^p$. Since $p>0$, this quantity will have the same sign as $y_0-y_1$. Thence
\begin{align*}
    y_0^p - y_1^p
    &= t_0^{-2p}\left(x_P^p - x^p + t_0^{2p}\abs{y_P}^p\right) - t_1^{-2p}\left(x_P^p - x^p + t_1^{2p}\abs{y_P}^p\right)
    \\
    &= t_0^{-2p}(x_P^p - x^p) + \abs{y_P}^p - t_1^{-2p}(x_P^p - x^p) - \abs{y_P}^p
    \\
    &= (t_0^{-2p} - t_1^{-2p})(x_P^p - x^p).
\end{align*}
Note that the first term in the above expression is positive, since $t_1 > t_0$. If $x < x_P$, then the second term in the above expression is positive, meaning $y_0^p - y_1^p$ is positive and $y_0 > y_1$. If instead $x > x_P$, the argument reverses and $y_0 < y_1$.
\end{proof}

The lemma below describes the aforementioned recursive algorithm.

\begin{lemma}\label{lem:algorithm}
    Fix an irrational $\alpha\in (-\frac12, \frac12)$ and set $P_{-1} = (0,1)$ and $t_{-1} = 1$. For each $m \in \Z$ with $m \geq 0$, there exists a $P_m=(x_m,y_m)\in L_\alpha$ and a $t_m > 1$ with the following properties:
    \begin{enumerate}
        \item $x_m > x_{m-1}$ and $\abs{y_m} < \abs{y_{m-1}}$,
        \item $L_\alpha$ is admissible for $\mathcal B_{t_m}(P_m)$,
        \item $F_{t_m}(P_{m-1}) = F_{t_m}(P_{m})$, and
        \item for any $t \in (t_{m-1}, t_m)$, there is no $P' = (x', y') \in L_\alpha$ different from $P_{m-1}$ with $x' > 0$ and $F_t(P') \leq F_t(P_{m-1})$.
    \end{enumerate}
\end{lemma}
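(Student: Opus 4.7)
The plan is to proceed by induction on $m$, with the given initial data $P_{-1}=(0,1)$ and $t_{-1}=1$ serving as the base case. The strategy is geometric: as $t$ increases from $t_{m-1}$, the region $\mathcal{B}_t(P_{m-1})$ deforms (per Lemma~\ref{lemma:incr-decr-t}), and I want to find $P_m$ as the first lattice point (other than $\pm P_{m-1}$) whose $F_t$-value drops to equal $F_t(P_{m-1})$. The base case amounts to checking that $L_\alpha$ is admissible for $\mathcal{B}_1(P_{-1})=\{(x,y):|x|^p+|y|^p<1\}$: for $(s,r-s\alpha)\in L_\alpha$ with $|s|\geq 1$, irrationality of $\alpha$ gives $|s|^p+|r-s\alpha|^p>|s|^p\geq 1$, and for $s=0$ only the origin lies inside.

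The central analytic tool in the inductive step is the function
\[
h(P',t)=F_t(P')^p-F_t(P_{m-1})^p=t^{-p}A+t^p B,\qquad A=|x'|^p-|x_{m-1}|^p,\;\;B=|y'|^p-|y_{m-1}|^p.
\]
A sign analysis of $\partial h/\partial t$ shows that $h$ can transition from nonnegative to negative as $t$ grows past $t_{m-1}$ only when $A>0$ and $B<0$ (``Case~1''); in this regime $h$ is strictly decreasing on $(0,\infty)$ with unique root $\tau(P'):=(-A/B)^{1/(2p)}$. The remaining sign configurations are ruled out by the admissibility hypothesis at $t_{m-1}$ combined with Lemma~\ref{lem:lower-norm}: if $A,B\leq 0$ (not both zero) then $P'$ would lie strictly inside $\mathcal{B}_{t_{m-1}}(P_{m-1})$, a contradiction; if $A<0,B>0$ then $h$ is monotone increasing and $h(P',t_{m-1})\geq 0$ forces $h\geq 0$ on $[t_{m-1},\infty)$; and $A,B\geq 0$ trivially yields $h\geq 0$.

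Next, define $t_m:=\inf\{\tau(P'):P'\in L_\alpha,\;|x'|>|x_{m-1}|,\;|y'|<|y_{m-1}|\}$. Rewriting $\tau(P')\leq\tau_0$ shows that any such $P'$ lies in the bounded box $\{|x|^p\leq|x_{m-1}|^p+\tau_0^{2p}|y_{m-1}|^p,\;|y|\leq|y_{m-1}|\}$, which contains only finitely many lattice points, so the infimum is attained at some $P^*$. Using the symmetry $P\mapsto -P$ of $L_\alpha$, I would then pick $P_m\in\{P^*,-P^*\}$ with $x_m>0$; this is possible since $|x^*|>|x_{m-1}|\geq 0$.

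Finally, properties~(1)--(4) can be read off quickly. Property~(3) is immediate from $\tau(P_m)=t_m$; for~(1), $P_m$ is a Case~1 point, giving $|y_m|<|y_{m-1}|$, while $x_m>x_{m-1}$ follows from $x_m=|x_m|>|x_{m-1}|\geq x_{m-1}$. For~(4), any $P'\neq P_{m-1}$ with $x'>0$ and $F_t(P')\leq F_t(P_{m-1})$ at some $t\in(t_{m-1},t_m)$ would have to lie in Case~1, forcing $\tau(P')\leq t<t_m$ and contradicting the minimality of $t_m$. For~(2), the equality $F_{t_m}(P_m)=F_{t_m}(P_{m-1})$ gives $\mathcal{B}_{t_m}(P_m)=\mathcal{B}_{t_m}(P_{m-1})$, and admissibility of this common set at $t=t_m$ follows by continuity from its admissibility on $(t_{m-1},t_m)$. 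The main obstacle I anticipate is the sign analysis of $h$ in the second paragraph: verifying that admissibility at $t_{m-1}$ really does control the initial sign of $h$ in each configuration, so that Case~1 is genuinely the unique mechanism by which lattice points can enter $\mathcal{B}_t(P_{m-1})$ as $t$ increases.
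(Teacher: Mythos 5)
Your route is genuinely different from the paper's and in several places cleaner. The paper defines $t_m$ as the maximal $t$ for which $L_\alpha$ is admissible for $\mathcal B_t(P_{m-1})$, invoking Proposition~\ref{prop:max-area} to show such a maximum exists, and then deduces property (1) from Lemma~\ref{lemma:incr-decr-t} together with concavity of the boundary and a maximal-$x$-coordinate tie-breaking rule. You instead compute the explicit entry time $\tau(P')=(-A/B)^{1/(2p)}$ of each lattice point and take $t_m$ to be the infimum of these. Your sign analysis of $h(P',t)=t^{-p}A+t^pB$ is correct and does show that a lattice point can enter $\mathcal B_t(P_{m-1})$ as $t$ grows only when $A>0$ and $B<0$ (in the case $A,B\le 0$ you must of course exclude the origin, which admissibility permits inside, but that is trivial), and membership in that case hands you property (1) immediately, more directly than the paper's argument.

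Two points need repair. First, a genuine gap: the definition $t_m=\inf\{\tau(P')\}$ and your attainment argument presuppose that the set of Case~1 lattice points is nonempty; if it were empty, $t_m=+\infty$ and the induction halts. This is precisely where an input from the geometry of numbers or Diophantine approximation is required --- the paper gets it from Proposition~\ref{prop:max-area}, since $F_{t}(P_{m-1})\ge t|y_{m-1}|$ forces the area of $\mathcal B_t(P_{m-1})$ to exceed $C_p\det L_\alpha$ for large $t$, so admissibility must fail; alternatively, Dirichlet's theorem supplies lattice points with $|x'|$ arbitrarily large and $|y'|$ arbitrarily small. You should add one of these. Second, a caveat rather than an error: when several lattice points attain the infimum you choose $P^*$ arbitrarily, whereas the paper takes the one of maximal $x$-coordinate among the boundary points. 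Your choice still satisfies the lemma as literally stated (a non-maximal choice makes the next step give $t_{m+1}=t_m$, and property (4) then holds vacuously on an empty interval), but the maximal-$x$ rule is what guarantees $t_{m+1}>t_m$ strictly, which is what makes property (4) non-vacuous and is needed for identifying the convergents with the best approximations in Theorem~\ref{thm:main}(1); you should adopt it.
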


\begin{proof}

    We construct the points $P_m$ inductively.
    By the construction of $L_\alpha$, the only points on the unit ball $\mathcal B = \mathcal B_{t_{-1}}(P_{-1})$ are $\pm P_{-1}$.
    For any $P=(x,y)$ with $y\neq 0$ we have
    \begin{equation}
        F_t(P) = F(t^{-1}x, ty) \geq F(0,ty) = t\abs{y},
    \end{equation}
    so $F_t(P)$  increases as $t$ increases.
    By Proposition $\ref{prop:max-area}$, there must be a maximum $t$ for which $L_\alpha$ is admissible for $\mathcal B_t(P_{-1})$.
    Call this maximum $t_0$.
    Among the finitely many $P' = (x', y') \in L_\alpha$ with $F_{t_{0}}(P') = F_{t_{0}}(P_{-1})$, there is a unique $P'$ with maximal $x'$ because $\alpha$ is irrational. Let $P_{0} = P'$.
    Note that $x_0>x_{-1}$.
    We repeat this process starting with $P_{0}$, always choosing the new point with maximal $x$-coordinate.
    Let $m\geq 0$.
    We construct $P_{m+1}$ from $P_m$ by increasing $t$, starting at $t_m$, until $L_\alpha$ is no longer admissible for $\mathcal B_{t}(P_m)$, and set $t_{m+1}$ as the maximum.
    We call the point on the boundary of $\mathcal B_{t_{m+1}}(P_m)$ with maximal $x$-coordinate $P_{m+1}$.
    We show that the sequence of points constructed this way satisfies properties (1)--(4) of the lemma.
    
    (1) By Lemma~\ref{lemma:incr-decr-t} and our choice of $P_m$ with maximal $x$-coordinate we see that $P_m$ must be to the right of $P_{m-1}$, so $x_{m}>x_{m-1}$.
    Additionally, the boundary of the ball $\mathcal B_{t_m}(P_m)$ is concave, so $\abs{y_m}<\abs{y_{m-1}}$.
    
    (2)--(3) These follow from our choice of $t_m$ as the maximal $t$ for which $L_\alpha$ is admissible for $\mathcal B_{t}(P_{m-1})$ and our choice of $P_m$ on the boundary of $\mathcal B_{t_m}(P_{m-1})$.
    
    (4) For any $t\in (t_{m-1},t_m)$, the lattice $L_\alpha$ is admissible for $B_t(P_{m-1})$, so there are no points $P'\in L_\alpha$ with $F_t(P') < F_t(P_{m-1})$.
    If there were a point $P'\in L_\alpha$ for which $F_t(P')=F_t(P_{m-1})$ then this would contradict the definition of $t_m$.
\end{proof}

Figure~\ref{fig:ball plot} shows a few steps of the algorithm described above.

We claim that the sequence $\{t_m\}$ from Lemma~\ref{lem:algorithm} tends to infinity.
Indeed, the area of the ball $\mathcal B_t(P_m)$ is 
\begin{equation} \label{eq:area-stretched-ball}
    \area{\mathcal{B}_t(P_m)} = F_t^2(P_m) \text{ } \area{\mathcal{B}},
\end{equation} 
so we observe that
\begin{equation}
        x_m t_m^{-1} = F(x_m t_m^{-1}, 0) \leq F(x_m t_m^{-1}, y_m t_m) = F_{t_m}(P_m) \leq C_p^{\frac12} \area{\mathcal{B}}^{-\frac12},
\end{equation}
by Lemma \ref{lem:lower-norm} and Proposition \ref{prop:max-area}.
Since $C_p$ is fixed for any fixed $p$ and $x_m \to \infty$, we must have that $t_m \to \infty$.

\begin{figure}[t]
    \centering
    \includegraphics{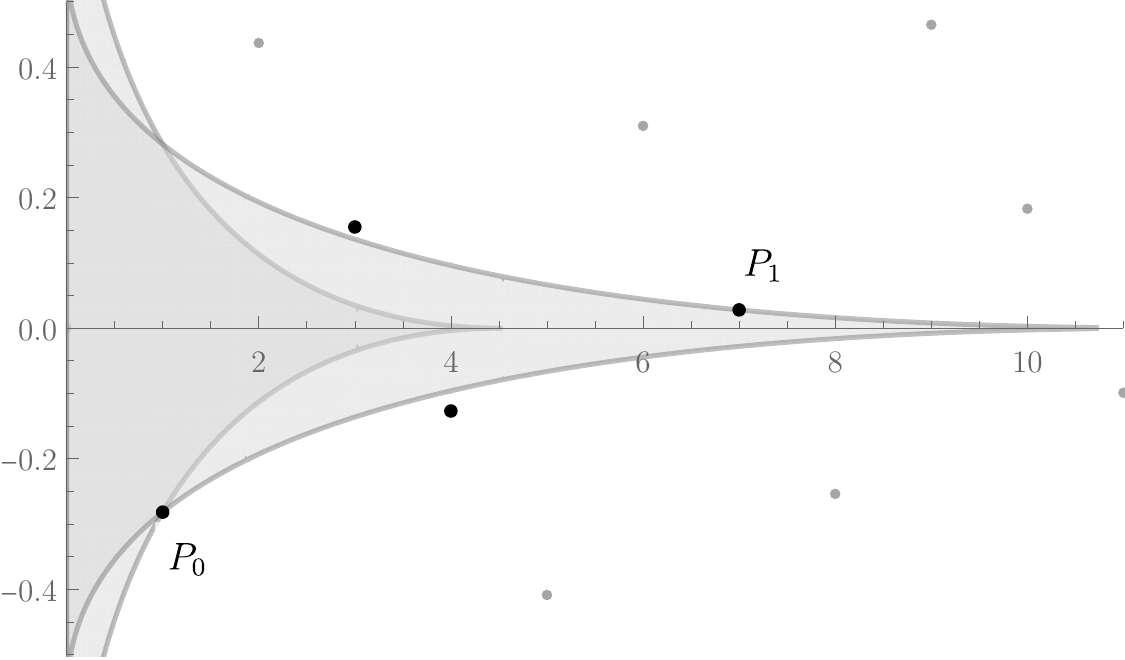}
    \caption{The lattice $L_\alpha$ for $\alpha = 3-e$ with $p=0.5$. Dark lattice points correspond to the regular convergents. The points $P_0 = (1,-\alpha)$ and $P_1=(7,2-7\alpha)$ give best approximations for $\alpha$ relative to $F^{\langle p \rangle}$.}
    \label{fig:ball plot}
\end{figure}

To any sequence $\{r_n/s_n\}$ of rational numbers we can associate a generalized continued fraction with rational coefficients; that is, an expression of the form
\begin{equation}
    a_0 + \frac{\ep_1}{a_1 + } \, \frac{\ep_2}{a_2 + } \, \frac{\ep_3}{a_3+} \cdots 
    := a_0 + \dfrac{\ep_1}{a_1+ \dfrac{\ep_2}{a_2+\dfrac{\ep_3}{a_3+\cdots}}}
\end{equation}
with $a_j,\ep_j\in \Q\setminus\{0\}$ and
\begin{equation}
    \frac{r_n}{s_n} = a_0 + \frac{\ep_1}{a_1 + } \, \frac{\ep_2}{a_2 + } \cdots \frac{\ep_n}{a_n}.
\end{equation}
The $\ep_j$ and $a_j$ are defined recursively in terms of $r_n$ and $s_n$, as we describe below.
If the sequence $\{r_n/s_n\}$ converges (resp.~diverges), we say that the continued fraction is convergent (resp.~divergent).
For any sequence $\{\rho_n\}$ of nonzero reals we have the transformation
\begin{equation} \label{eq:rho-transform}
    a_0 + \frac{\ep_1}{a_1 + } \, \frac{\ep_2}{a_2 + } \, \frac{\ep_3}{a_3+} \cdots = 
    a_0 + \frac{\rho_1\ep_1}{\rho_1 a_1 + } \, \frac{\rho_1\rho_2\ep_2}{\rho_2a_2 + } \, \frac{\rho_2\rho_3\ep_3}{\rho_3a_3+} \cdots
\end{equation}
which preserves the convergents, as can be shown by induction.
In particular, a continued fraction with rational coefficients can be easily transformed into one with integral coefficients (except possibly $a_0$) by clearing denominators. 
Of course, for a generic sequence, this process of clearing denominators will produce arbitrarily large values of $|\ep_j|$ and $|a_j|$.

Fix an irrational $\alpha\in (-\frac12, \frac12)$ and let $P_m = (x_m, y_m)$ as in Lemma $\ref{lem:algorithm}$. For $m \geq 1$, let \begin{equation}
    g_m = \begin{pmatrix} x_m & y_m \\ x_{m-1} & y_{m-1} \end{pmatrix}
\end{equation}
and set \begin{equation}g_{0} =\begin{pmatrix} x_0 & y_0 \\ 0 & x_0 \end{pmatrix}.\end{equation}
Note that $\det g_m \in \Z$ for all $m$. 
Define $\wt{a}_m$ and $\wt{\ep}_m$ by 
\begin{equation}\label{eq:a-tild-eps-tild-def}
    g_m = \begin{pmatrix}\wt{a}_m & \wt{\ep}_m \\ 1 & 0 \end{pmatrix}g_{m-1}
\end{equation} for $m \geq 1$.

We further define $s_m = x_m$ and $r_m = y_m + x_m \alpha$. Then, \begin{equation}
    g_m = 
    \begin{pmatrix} 
    s_m & r_m - s_m \alpha \\
    s_{m-1} & r_{m-1} - s_{m-1} \alpha \end{pmatrix}.
\end{equation}
We see that \begin{equation}
    \begin{pmatrix}
    r_m & s_m \\
    r_{m-1} & s_{m-1} \end{pmatrix}
    = g_m 
    \begin{pmatrix} 
    \alpha & 1 \\ 
    1 & 0 \end{pmatrix}.
\end{equation}
Substituting for $g_m$, we have that \begin{equation}
    \begin{pmatrix} r_m & s_m \\ r_{m-1} & s_{m-1} \end{pmatrix} = \begin{pmatrix}\wt{a}_m & \wt{\ep}_m \\ 1 & 0 \end{pmatrix} \begin{pmatrix} s_{m-1} & r_{m-1} - s_{m-1}\alpha \\ s_{m-2} & r_{m-2} - s_{m-2}\alpha \end{pmatrix} \begin{pmatrix} \alpha & 1 \\ 1 & 0 \end{pmatrix}.
\end{equation}
From this, we get the recurrence relations \begin{align}
    r_m &= \wt{a}_m r_{m-1} + \wt{\ep}_m r_{m-2}, & r_0 &= y_0 + x_0 \alpha, & r_{-1} &= x_0, \\
    s_m &= \wt{a}_m s_{m-1} + \wt{\ep}_m s_{m-2}, & s_0 &= x_0, & s_{-1} &= 0.
\end{align}
Following the analogous proof from the theory of regular continued fractions, we obtain \begin{equation}
    \frac{r_m}{s_m} = a_0 + \frac{\wt{\ep}_1}{\wt{a}_1 +} \, \frac{\wt{\ep}_2}{\wt{a}_2 +} \dots \frac{\wt{\ep}_m}{\wt{a}_m},
\end{equation} where $a_0 = r_0/s_0$.

We prefer to have integer coefficients.
To that end, we define 
\begin{equation}
    a_m = (\det{g_{m-1}}) \wt{a}_m
\end{equation}
for $m \geq 1$. For $m \geq 2$, we define 
\begin{equation}
    \ep_m = (\det{g_{m - 2}}) (\det{g_{m-1}}) \wt{\ep}_m
\end{equation}
and set $\ep_1 = (\det{g_0}) \wt{\ep_1}$. We note that the denominators of $\wt{a}_m$ and $\wt{\ep}_m$ are divisors of $\det{g_{m-1}}$ by their construction in \eqref{eq:a-tild-eps-tild-def}, so $a_m$ and $\ep_m$ are always integral.
We then have a generalized continued fraction for $r_m/s_m$, with only $a_0$ not strictly integral. We note that this transformation is one of the form given in \eqref{eq:rho-transform}, which preserves the convergents to the continued fraction. Thus,
\begin{equation}
    \frac{r_m}{s_m} = a_0 + \frac{\ep_1}{a_1 + } \, \frac{\ep_2}{a_2 + }\dots \frac{\ep_m}{a_m}.
\end{equation}
We are not guaranteed that $\gcd(\ep_m, a_m) = 1$ or even that $\gcd(\ep_m, a_m, \ep_{m+1}) = 1$. 
However, this generalized continued fraction is unique up to transformation.

\begin{proof}[Proof of Theorem~\ref{thm:main}]

Let $\delta > 0$. By Proposition \ref{thm:beta}, there exists a $p = p_\delta$ such that \[
4^{-\frac{1}{p}} \beta_p \leq \frac{1}{\sqrt{5}} + \delta.
\]

(1) The $p$-convergents are best approximations with respect to $F$ because the associated lattice points satisfy part (4) of Lemma \ref{lem:algorithm}.

(2) Since every $p$-convergent is a best approximation with respect to $F$, \eqref{eq:am-gm-beta} is satisfied. Thus, every $r_m/s_m$ satisfies \[
s_m \abs{r_m - s_m\alpha} < \frac{1}{\sqrt{5}} + \delta.
\]

(3) Since $\wt{\ep}_m = -\det{g_m}/\det{g_{m-1}}$, we have for $m \geq 2$ the bound \begin{equation}
\abs{\ep_m} \leq \abs{(\det g_{m-2}) (\det g_{m})}.
\end{equation}
For $m = 1$, we see that $\ep_1 \leq \abs{\det g_{m}}$.
We therefore only need to prove an upper bound on $\abs{\det g_m}$ to obtain a bound for $\abs{\ep_m}$.
This is accomplished in the next proposition, which proves that the determinant is bounded and the bound depends only on $p$. By applying a transformation as in \eqref{eq:rho-transform}, we could obtain an equivalent continued fraction where $\gcd(\ep_m, a_m, \ep_{m+1}) = 1$, but the values of each new $\abs{\ep_m}$ would only be smaller. These new partial numerators would still be bounded by a constant dependent only on $p$.
\end{proof}

\begin{proposition} \label{prop:index-upper-bd}
Fix $p\in (0,1)$ and let $\alpha \in (-\frac 12,\frac 12)$ be an irrational number.
With $g_m$ as above, for each $m\geq 1$ we have
\begin{equation}\label{eq:C_p}
    \abs{\det g_m} \leq 2^{4/p-1} \frac{\Gamma\big(1+\frac 1p)^2}{\Gamma\big(1+\frac 2p)} c_p^2.
\end{equation}
\end{proposition}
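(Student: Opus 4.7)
The plan is to bound $|\det g_m|$---which equals the area of the fundamental parallelogram of the sublattice $\langle P_{m-1}, P_m\rangle \subseteq L_\alpha$---by fitting a centrally symmetric parallelogram spanned by $P_{m-1}$ and $P_m$ inside a suitably dilated copy of $\mathcal{B}_{t_m}(P_m)$, whose area is already controlled by Proposition~\ref{prop:max-area}.

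Concretely, I would consider the parallelogram
\[
Q = \{a P_{m-1} + b P_m : -1 \leq a,b \leq 1\},
\]
which is the linear image of $[-1,1]^2$ under $(a,b) \mapsto a P_{m-1} + b P_m$, so $\operatorname{area}(Q) = 4|\det g_m|$. Setting $R = F_{t_m}(P_m)$, Lemma~\ref{lem:algorithm}(3) also gives $F_{t_m}(P_{m-1}) = R$, and the quasi-triangle inequality combined with the absolute homogeneity of $F$ yields, for $(a,b) \in [-1,1]^2$,
\[
F_{t_m}(a P_{m-1} + b P_m) \leq 2^{\frac 1p - 1}\bigl(|a|R + |b|R\bigr) \leq 2^{\frac 1p} R.
\]
Hence $Q$ lies inside the dilated quasiball $\{P \in \R^2 : F_{t_m}(P) \leq 2^{1/p} R\}$, whose area equals $2^{2/p} \cdot \operatorname{area}(\mathcal{B}_{t_m}(P_m))$ by homogeneity of $F_{t_m}$ under the scaling $P \mapsto 2^{1/p} P$.

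By Lemma~\ref{lem:algorithm}(2), $L_\alpha$ is admissible for $\mathcal{B}_{t_m}(P_m)$, and $\det L_\alpha = 1$, so Proposition~\ref{prop:max-area} gives $\operatorname{area}(\mathcal{B}_{t_m}(P_m)) \leq C_p$. Chaining the three inequalities,
\[
4|\det g_m| = \operatorname{area}(Q) \leq 2^{2/p}\operatorname{area}(\mathcal{B}_{t_m}(P_m)) \leq 2^{2/p} C_p,
\]
and substituting $C_p = 2^{2/p+1}\Gamma(1+\tfrac 1p)^2 / \Gamma(1+\tfrac 2p) \cdot c_p^2$ recovers exactly the claimed bound $|\det g_m| \leq 2^{4/p-1}\Gamma(1+\tfrac 1p)^2/\Gamma(1+\tfrac 2p) \cdot c_p^2$.

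The only real moment of insight is to take $Q$ as the full centrally symmetric parallelogram---rather than, say, the quadrilateral with vertices $\pm P_{m-1}, \pm P_m$, which would sit inside $\mathcal{B}_{t_m}(P_m)$ in the convex setting but need not here---so that the quasi-triangle inequality applies cleanly at each vertex; the resulting factor $2^{1/p}$ in the dilation is precisely what accounts for the gap between the Proposition~\ref{prop:max-area} bound on $\operatorname{area}(\mathcal{B}_{t_m}(P_m))$ and the target bound on $|\det g_m|$. The remainder of the argument is mechanical.
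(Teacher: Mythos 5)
Your proof is correct, and it takes a genuinely different route from the paper's. The paper works with the curvilinear sector $D$ cut out by the rays through the two rescaled points $Q_1, Q_2$ and the boundary curve $u^p+v^p=R^p$: it evaluates $\operatorname{area}(D)$ as a polar integral, bounds the integrand below using the minimum $2^{2-2/p}$ of $u\mapsto (1+u)^2/(1+u^p)^{2/p}$, deduces $\operatorname{area}(D)\geq 2^{1-2/p}\abs{\det g_m}$, and then compares two copies of $D$ with $\operatorname{area}\mathcal B_{t_m}(P_m)\leq C_p$; a separate case is needed when $y_{m-1}$ and $y_m$ have opposite signs. You instead apply the quasi-triangle inequality (together with the homogeneity of $F$, and the fact that the inequality transfers to $F_{t}$ since it is $F$ composed with a linear map) to place the full centrally symmetric parallelogram $\{aP_{m-1}+bP_m : |a|,|b|\le 1\}$, of area $4\abs{\det g_m}$, inside the $2^{1/p}$-dilate of $\mathcal B_{t_m}(P_m)$, and then invoke Proposition~\ref{prop:max-area} once. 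Both arguments land on exactly the same constant $2^{2/p-2}C_p = 2^{4/p-1}\Gamma(1+\tfrac1p)^2\Gamma(1+\tfrac2p)^{-1}c_p^2$: the factor $2^{2/p}$ you lose by dilating matches the factor the paper loses through the pointwise bound on the sector integrand. Your version is shorter, avoids the sign case analysis entirely, and is the natural quasinorm analogue of the classical Minkowski-style parallelogram argument; the paper's version keeps closer track of where along the boundary curve the loss occurs, which is the kind of information one would need to sharpen the constant for particular configurations of $P_{m-1},P_m$.
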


\begin{proof}
Without loss of generality we may assume that $y_{m-1}>0$.
We divide into two cases depending on the sign of $y_{m}$.

Suppose first that $y_{m}>0$.
Then the points $P_m = (x_{m},y_{m})$ and $P_{m-1} = (x_{m-1}, y_{m-1})$ are in the first quadrant, with $P_{m-1}$ to the left of $P_m$.
By Lemma~\ref{lem:algorithm} there exists a $t>0$ such that $F_t(P_m) = F_t(P_{m-1})$.
Let $Q_1 = (t^{-1}x_m,ty_{m}) = (u_1,v_1)$ and $Q_2 = (t^{-1}x_{m-1},ty_{m-1}) = (u_2,v_2)$.
Then $Q_1$ and $Q_2$ both lie on the boundary of $R\mathcal B$, where $R=F_t(P_m)$ and $\mathcal B=\{P\in \R^2:F(P)<1\}$ is the unit ball.  Furthermore,
\begin{equation}
    |\det g_m| = u_1 v_2 - u_2 v_1.
\end{equation}
We begin by computing the area of the region $D$ bounded by the line segments $OQ_1$ and $OQ_2$ and the portion of the curve $u^p+v^p=R^p$ in the first quadrant.
Let $\theta_1 = \arctan(v_1/u_1)$ and $\theta_2=\arctan(v_2/u_2)$ denote the angles that the line segments $OQ_1$ and $OQ_2$ make with the positive $x$-axis.
In polar coordinates $(r,\theta)$ the region $D$ is described as
\begin{equation}
    \theta_1 \leq \theta \leq \theta_2, \qquad 0\leq r\leq r(\theta) = \frac{R}{((\cos \theta)^p+(\sin \theta)^p)^{1/p}}.
\end{equation}
Thus we have
\begin{align}
    \operatorname{area}(D) = \int_{\theta_1}^{\theta_2} \int_0^{r(\theta)} r\, drd\theta = \frac {R^2}2 \int_{\theta_1}^{\theta_2} \frac{d\theta}{((\cos\theta)^p+(\sin\theta)^p)^{2/p}}.
\end{align}
Making the change of variable $u=\tan\theta$ we find that
\begin{align}
    \operatorname{area}(D) = \frac {R^2}2 \int_{v_1/u_1}^{v_2/u_2} \frac{du}{(1+u^p)^{2/p}} = \frac {R^2}2 \int_{v_1/u_1}^{v_2/u_2} \left(\frac{(1+u)^2}{(1+u^p)^{2/p}}\right)\frac{du}{(1+u)^2}.
\end{align}
The minimum of the function $u\mapsto \frac{(1+u)^2}{(1+u^p)^{2/p}}$ is $2^{2-2/p}$; it occurs at $u=1$.
Thus we have
\begin{align}
    \operatorname{area}(D) \geq 2^{1-2/p}R^2 \int_{v_1/u_1}^{v_2/u_2} \frac{du}{(1+u)^2} = 2^{1-2/p} R^2 \frac{u_1v_2-v_1u_2}{(u_1+v_1)(u_2+v_2)}.
\end{align}
The maximum value of $(u,v)\mapsto u+v$ subject to the constraint $u^p+v^p=R^p$ is $R$; it occurs when $(u,v)=(R,0)$. Thus we have
\begin{equation}
    \operatorname{area}(D) \geq 2^{1-2/p} |\det g_m|.
\end{equation}
There are two copies of $D$ inside $R\mathcal B = \mathcal B_1(Q_1)$, so we conclude by Proposition~\ref{prop:max-area} that
\begin{equation}
    |\det g_m| \leq 2^{2/p-2} (2\operatorname{area}(D)) \leq 2^{2/p-2} \operatorname{area}(\mathcal B_1(Q_1)) < 2^{4/p-1} \frac{\Gamma\big(1+\frac 1p\big)^2}{\Gamma\big(1+\frac 2p\big)} c_p^2.
\end{equation}

If $y_{m}<0$, define $Q_1$ and $Q_2$ as before and define $Q_1' = (u_1,-v_1)$.
Let $D$ denote the region bounded by the line segments $OQ_1'$ and $OQ_2$ and the portion of the curve $u^p+v^p=R^p$ in the first quadrant.
Then we have
\begin{equation}
    \operatorname{area}(D) \geq 2^{1-2/p}(u_1v_2+v_1u_2).
\end{equation}
Let $D'$ denote the triangle $OQ_1Q_1'$. 
Then
\begin{equation}
    \operatorname{area}(D') = u_1|v_1| \geq u_2|v_1| \geq 2^{2-2/p} u_2|v_1| = -2^{2-2/p} u_2v_1,
\end{equation}
so
\begin{equation}
    \operatorname{area}(D\cup D') \geq 2^{1-2/p} (u_1v_2 - v_1u_2) = 2^{1-2/p} |\det g_m|.
\end{equation}
The remainder of the proof proceeds as in the first case.
\end{proof}

\section{Regular convergents skipped by the \texorpdfstring{$p$}{p}-continued fraction}
\label{sec:skipped}

In this section we prove Theorem~\ref{thm:max-ell} and several lemmas to aid us in the next section, all of which concern the question of how many of the regular convergents of a given irrational $\alpha$ are skipped by the $p$-continued fraction.

We first list several well-known results about the regular continued fraction.
Fix an irrational $\alpha$.
For $p\in (0,1)$, let $p_n/q_n$ denote the regular convergents of $\alpha$ and $r_n/s_n$ the $p$-convergents. First, $p_n$ and $q_n$ are defined recursively by
\begin{align}\label{eq:p-q-rec}
    p_n = b_n p_{n-1} + p_{n-2}, && p_{-2} &= 0, & p_{-1} &= 1, \\
    q_n = b_n q_{n-1} + q_{n-2}, && q_{-2} &= 1, & q_{-1} &= 0,
\end{align} where $b_n$ is the $n$-th regular partial quotient.
Additionally, we have that for $n \geq -1$, 
\begin{equation}\label{eq:Schmidt-3C}
    q_n p_{n-1} - p_n q_{n-1} = (-1)^n,
\end{equation}
and for $n \geq 0$, 
\begin{equation}\label{eq:Schmidt-3D}
    q_np_{n-2} - p_nq_{n-2} = (-1)^{n-1}b_n.
\end{equation}
These results and their proofs can be found in Section~3 of \cite[Ch.~I]{schmidt}.

The number of consecutive regular convergents $p_n/q_n$ that can be skipped by the $p$-continued fraction is closely related to the determinants that appear in the following proposition.

\begin{proposition} \label{prop:D-ell}
Notation as above, let $b_n$ denote the $n$-th regular partial quotient for $\alpha$ and for $\ell\geq 0$ define
\begin{equation}
    D_\ell(n) = (-1)^{n-1}\left(q_{n-1} p_{n+\ell} - p_{n-1} q_{n+\ell}\right).
\end{equation}
Then $D_0(n) = 1$, $D_1(n) = b_{n+1}$, and for $\ell\geq 2$, we have
\begin{equation}
    D_{\ell}(n) = b_{n+\ell}D_{\ell-1}(n) + D_{\ell-2}(n).
\end{equation}
\end{proposition}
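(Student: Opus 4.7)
The plan is to verify the three claims in sequence: the two base cases $D_0(n)=1$ and $D_1(n)=b_{n+1}$ follow directly from the known identities \eqref{eq:Schmidt-3C} and \eqref{eq:Schmidt-3D}, while the recurrence is a one-line consequence of the standard recurrence \eqref{eq:p-q-rec} for the regular convergents.

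First, I would expand $D_0(n) = (-1)^{n-1}(q_{n-1}p_n - p_{n-1}q_n)$. Equation \eqref{eq:Schmidt-3C} gives $q_n p_{n-1} - p_n q_{n-1}=(-1)^n$, so $q_{n-1}p_n - p_{n-1}q_n = (-1)^{n-1}$, which yields $D_0(n) = 1$. Second, I would compute $D_1(n) = (-1)^{n-1}(q_{n-1}p_{n+1} - p_{n-1}q_{n+1})$; applying \eqref{eq:Schmidt-3D} with $n$ replaced by $n+1$ gives $q_{n+1}p_{n-1} - p_{n+1}q_{n-1} = (-1)^n b_{n+1}$, so after multiplying by $-(-1)^{n-1}$ we obtain $D_1(n) = b_{n+1}$.

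For the recurrence, I would substitute the relations $p_{n+\ell} = b_{n+\ell}p_{n+\ell-1}+p_{n+\ell-2}$ and $q_{n+\ell} = b_{n+\ell}q_{n+\ell-1}+q_{n+\ell-2}$ from \eqref{eq:p-q-rec} into the definition of $D_\ell(n)$. Distributing gives
\begin{equation}
q_{n-1}p_{n+\ell}-p_{n-1}q_{n+\ell} = b_{n+\ell}\bigl(q_{n-1}p_{n+\ell-1}-p_{n-1}q_{n+\ell-1}\bigr) + \bigl(q_{n-1}p_{n+\ell-2}-p_{n-1}q_{n+\ell-2}\bigr),
\end{equation}
and multiplying by $(-1)^{n-1}$ yields $D_\ell(n) = b_{n+\ell}D_{\ell-1}(n)+D_{\ell-2}(n)$.

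There is no real obstacle here; the proposition is essentially a bookkeeping exercise and the only thing to watch is keeping the sign $(-1)^{n-1}$ in the correct place throughout the base case computations so that \eqref{eq:Schmidt-3C} and \eqref{eq:Schmidt-3D} produce the unsigned values $1$ and $b_{n+1}$.
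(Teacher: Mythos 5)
Your proposal is correct and follows exactly the paper's argument: the base cases from \eqref{eq:Schmidt-3C} and \eqref{eq:Schmidt-3D}, and the recurrence by substituting \eqref{eq:p-q-rec} and regrouping. The sign bookkeeping is handled correctly throughout.
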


\begin{proof}

For the preliminary cases $\ell\in \{0,1\}$ we have $D_0(n) = 1$ and $D_1(n) = b_{n+1}$ by \eqref{eq:Schmidt-3C} and \eqref{eq:Schmidt-3D}.
For $\ell \geq 2$, using the recursive definitions of $p_{n+\ell}$ and $q_{n+\ell}$ we have that 
\begin{align}
    D_\ell(n) &= (-1)^{n-1}\left( q_{n-1}(b_{n+\ell} p_{n+\ell-1} + p_{n+\ell-2}) - p_{n-1} (b_{n+\ell} q_{n+\ell-1} + q_{n+\ell-2})\right) \\
    &= b_{n+\ell} (-1)^{n-1} (q_{n-1} p_{n+\ell-1} - p_{n-1} q_{n+\ell-1}) + (-1)^{n-1}(q_{n-1} p_{n+\ell-2} - p_{n-1} q_{n+\ell-2}) \\
    &= b_{n+\ell} D_{\ell-1}(n) + D_{\ell-2}(n),
\end{align}
as desired.
\end{proof}

\begin{proof}[Proof of Theorem~\ref{thm:max-ell}]
By Proposition~\ref{prop:D-ell}, using that the regular partial quotients of $\alpha$ satisfy $b_n\geq 1$ for all $n$, we have the rough lower bound 
\[
    D_\ell(n)\geq F_{\ell+1} \geq \frac{\varphi^\ell}{\sqrt{5}},
\]
where $F_\ell$ is the $\ell$-th Fibonacci number and $\varphi = \frac{1}{2}(1 + \sqrt{5})$.
On the other hand, $D_\ell(n) = \abs{\det g_m}$ for some $m$, so by Proposition~\ref{prop:index-upper-bd} we have $D_\ell(n) \leq C_p$ where $C_p$ is the expression on the right hand side in \eqref{eq:C_p}.
Thus
\begin{equation}
    \ell \leq \log_\varphi(\sqrt 5 C_p). \tag*{\qedhere}
\end{equation}
\end{proof}

To aid in proving Theorem~\ref{thm:1-per-big-skips}, we state several lemmas regarding 1-periodic continued fractions.
For the remainder of this section we assume that $a\in \Z$ with $a\geq 1$ and that
\begin{equation} \label{eq:alpha-1-per}
    \alpha = a+\frac{1}{a+}\,\frac{1}{a+}\,\frac{1}{a+}\cdots=\frac{a+\sqrt{a^2+4}}{2}.
\end{equation}

\begin{lemma}\label{lemma:R-seq}
    Suppose that $\alpha$ satisfies \eqref{eq:alpha-1-per} and define a sequence $R_n$ by $R_0 = 0$,  $R_1 = 1$, and
    \begin{equation}\label{eq:R-seq}
        R_n = a R_{n-1} + R_{n-2} \quad \text{ for } n\geq 2.
    \end{equation}
    Then for all $n\geq 0$ we have
    \begin{equation}
        R_n = \frac{\alpha^n - (-\alpha)^{-n}}{\alpha+\alpha^{-1}}.
    \end{equation}
\end{lemma}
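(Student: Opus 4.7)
The plan is to recognize $R_n$ as the solution to a second-order linear recurrence and solve it via its characteristic polynomial. The recurrence $R_n = aR_{n-1} + R_{n-2}$ has characteristic polynomial $x^2 - ax - 1$, and from the definition $\alpha = \tfrac12(a+\sqrt{a^2+4})$ one sees that $\alpha$ is one of its roots, while the other is $\tfrac12(a-\sqrt{a^2+4})$. A quick computation of the product of the roots gives $-1$, so the second root equals $-\alpha^{-1}$; equivalently, $\alpha^2 = a\alpha + 1$ together with division by $\alpha^2$ gives $\alpha^{-2} = 1 + a(-\alpha^{-1})$, confirming that $-\alpha^{-1}$ also satisfies $x^2 = ax + 1$.

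Consequently, every solution of the recurrence has the form
\begin{equation}
    R_n = A\,\alpha^n + B\,(-\alpha^{-1})^n
\end{equation}
for some constants $A,B$. I would then pin down $A$ and $B$ from the initial values. The condition $R_0 = 0$ forces $B = -A$, and the condition $R_1 = 1$ becomes $A(\alpha + \alpha^{-1}) = 1$, so $A = 1/(\alpha + \alpha^{-1})$. Substituting yields
\begin{equation}
    R_n = \frac{\alpha^n - (-\alpha^{-1})^n}{\alpha + \alpha^{-1}} = \frac{\alpha^n - (-\alpha)^{-n}}{\alpha + \alpha^{-1}},
\end{equation}
which is the claimed formula.

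As a sanity check one could instead argue by induction: the cases $n=0$ and $n=1$ are immediate from the explicit formula, and the inductive step reduces to verifying that the right-hand side satisfies the recurrence, which follows termwise from $\alpha^n = a\alpha^{n-1} + \alpha^{n-2}$ and the analogous identity for $-\alpha^{-1}$. There is no real obstacle here; this is a textbook linear recurrence and the only thing to be careful about is correctly identifying the second root as $-\alpha^{-1}$ rather than $\alpha^{-1}$, which is essential for the minus sign in the numerator of the closed form.
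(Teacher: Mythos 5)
Your proof is correct and is essentially the same as the paper's: the paper diagonalizes the companion matrix $\left(\begin{smallmatrix} a & 1 \\ 1 & 0\end{smallmatrix}\right)$, whose eigenvalues are exactly the characteristic roots $\alpha$ and $-\alpha^{-1}$ that you identify, and both arguments then reduce to fitting the initial conditions $R_0=0$, $R_1=1$. The only presentational difference is matrix language versus the general-solution ansatz, and your identification of the second root as $-\alpha^{-1}$ (via the product of roots being $-1$) matches the paper's computation.
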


\begin{proof}

We can rewrite \eqref{eq:R-seq} using matrices, letting us solve for any $R_{n+1}$ and $R_{n}$ for $n \geq 1$ by writing \begin{equation}
    \begin{pmatrix} R_{n+1} \\ R_{n} \end{pmatrix} = 
    \begin{pmatrix} a & 1 \\
    1 & 0 \end{pmatrix}^{n}
    \begin{pmatrix} 1 \\ 0 \end{pmatrix}.
\end{equation}
The eigenvalues of $\left(\begin{smallmatrix} a & 1 \\ 1 & 0 \end{smallmatrix} \right)$ are $\alpha$ and $-\alpha^{-1}$, where we have used $\eqref{eq:alpha-1-per}$ and $\alpha = a + \alpha^{-1}$. Diagonalizing, we obtain
\begin{equation}
    \begin{pmatrix} R_{n+1} \\ R_{n} \end{pmatrix} = 
    \frac{1}{\alpha + \alpha^{-1}}
    \begin{pmatrix} 1 & 1 \\
    \alpha^{-1} & -\alpha \end{pmatrix}
    \begin{pmatrix} \alpha & 0 \\
    0 & -\alpha^{-1} \end{pmatrix}^n
    \begin{pmatrix}  \alpha & 1 \\
    \alpha^{-1} & -1 \end{pmatrix}
    \begin{pmatrix} 1 \\ 0 \end{pmatrix}.
\end{equation}
Further simplification yields
\begin{equation}
    \begin{pmatrix} R_{n+1} \\ R_{n} \end{pmatrix} = 
    \frac{1}{\alpha + \alpha^{-1}}
    \begin{pmatrix} \alpha^{n+1} - (-\alpha)^{-n-1} & \alpha^{n} - (-\alpha)^{-n} \\
    \alpha^{n} - (-\alpha)^{-n} & \alpha^{n-1} - (-\alpha)^{-n+1}
    \end{pmatrix}
    \begin{pmatrix} 1 \\ 0 \end{pmatrix}.
\end{equation}
This completes the proof.
\end{proof}

\begin{lemma}\label{lemma:num-den-swap}
If $\alpha$ satisfies \eqref{eq:alpha-1-per} then $p_{n-1}=q_n$ for all $n\geq 1$.
\end{lemma}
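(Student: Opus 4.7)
The plan is to exploit that both sides of the claimed identity satisfy the same second-order linear recurrence with matching initial data. Since $\alpha$ satisfies \eqref{eq:alpha-1-per}, every regular partial quotient satisfies $b_n = a$, so the recurrences in \eqref{eq:p-q-rec} specialize to
\begin{equation}
    p_n = a\,p_{n-1} + p_{n-2}, \qquad q_n = a\,q_{n-1} + q_{n-2}.
\end{equation}
In particular, setting $u_n = p_{n-1}$ and $v_n = q_n$ for $n\geq 1$, both sequences $\{u_n\}$ and $\{v_n\}$ satisfy $x_n = a\,x_{n-1} + x_{n-2}$.

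Next I would check the two base cases from the initial conditions $p_{-2}=0,\ p_{-1}=1,\ q_{-2}=1,\ q_{-1}=0$. For $n=1$ this gives $u_1 = p_0 = a\cdot 1 + 0 = a$ and $v_1 = q_1 = a\cdot 1 + 0 = a$. For $n=2$, $u_2 = p_1 = a\cdot a + 1 = a^2+1$ and $v_2 = q_2 = a(a) + 1 = a^2+1$. So $u_1 = v_1$ and $u_2 = v_2$. A one-line induction on $n$, using the common recurrence, then yields $u_n = v_n$ for all $n\geq 1$, which is the lemma.

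Equivalently, one can observe that the sequence $R_n$ of Lemma~\ref{lemma:R-seq} matches both $q_{n}$ and $p_{n-1}$ via $q_n = R_{n+1}$ and $p_{n-1}=R_{n+1}$, since $R_n$ is the unique solution of $R_n = aR_{n-1}+R_{n-2}$ with $R_0=0,\ R_1=1$, and the initial terms of $\{q_n\}_{n\geq 0}$ and $\{p_{n-1}\}_{n\geq 0}$ agree with $R_{n+1}$. This immediately gives the desired equality and has the added benefit of supplying a closed-form expression ready for use in Section~\ref{sec:1-per}.

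There is no real obstacle here; the argument is a routine verification of base cases followed by induction on the shared recursion. The only thing to be a little careful about is keeping the index shifts consistent, since one is comparing $p_{n-1}$ with $q_n$ rather than $p_n$ with $q_n$.
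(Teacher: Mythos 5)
Your argument is correct and is essentially the paper's own proof: both identify $p_{n-1}$ and $q_n$ as solutions of the common recurrence $x_n = ax_{n-1}+x_{n-2}$ with matching initial data, and your second paragraph (matching both to $R_{n+1}$ from Lemma~\ref{lemma:R-seq}) is exactly the route the paper takes. The base cases and index shifts you check are all accurate.
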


\begin{proof}

Since $\alpha$ satisfies \eqref{eq:alpha-1-per}, every partial quotient is $a$.
Then because $q_0 = 1$, $p_n$ and $q_n$ each satisfy with offset the recursive definition for $R_n$ given in Lemma \ref{lemma:R-seq}. In particular, $p_n = R_{n+2}$ and $q_{n} = R_{n+1}$. Thus $p_{n-1} = R_{n+1} = q_n$.
\end{proof}

\begin{lemma}\label{lemma:theta}
    Suppose that $\alpha$ satisfies \eqref{eq:alpha-1-per} and let $R_n$ be as in Lemma~\ref{lemma:R-seq}.
    Then for any $n\geq 0$ we have
    \begin{equation}
        \abs{R_{n+1}-R_n\alpha} = \alpha^{-n}.
    \end{equation}
\end{lemma}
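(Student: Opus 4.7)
The plan is to give a direct computation using the closed form for $R_n$ supplied by Lemma~\ref{lemma:R-seq}. Substituting, I obtain
\begin{equation}
    R_{n+1} - R_n\alpha = \frac{\alpha^{n+1} - (-\alpha)^{-n-1}}{\alpha+\alpha^{-1}} - \alpha \cdot \frac{\alpha^{n} - (-\alpha)^{-n}}{\alpha+\alpha^{-1}},
\end{equation}
and the $\alpha^{n+1}$ terms in the numerator cancel. The key observation is that $(-\alpha)^{-n-1} = -(-1)^{-n}\alpha^{-n-1}$ and $(-\alpha)^{-n} = (-1)^{-n}\alpha^{-n}$, so the remaining numerator is
\begin{equation}
    -(-\alpha)^{-n-1} + \alpha(-\alpha)^{-n} = (-1)^{-n}\alpha^{-n}(\alpha^{-1}+\alpha).
\end{equation}
This factor of $\alpha+\alpha^{-1}$ cancels with the denominator, leaving $(-1)^n\alpha^{-n}$. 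Taking absolute values yields the claim.

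Alternatively, one can prove the signed identity $R_{n+1} - R_n\alpha = (-1)^n\alpha^{-n}$ by induction on $n$, which avoids unpacking the closed form. The base case $n=0$ gives $R_1 - R_0\alpha = 1$. For the inductive step, using the recurrence $R_{n+1} = aR_n + R_{n-1}$ and the identity $a-\alpha = -\alpha^{-1}$ (which follows from \eqref{eq:alpha-1-per}), I compute
\begin{equation}
    R_{n+1} - R_n\alpha = (a-\alpha)R_n + R_{n-1} = -\alpha^{-1}\bigl(R_n - \alpha R_{n-1}\bigr) = -\alpha^{-1} \cdot (-1)^{n-1}\alpha^{-(n-1)} = (-1)^n\alpha^{-n}.
\end{equation}

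Either route is short; there is no real obstacle. I would likely go with the closed-form calculation because it matches the setup provided by the previous lemma and makes the sign tracking transparent, though the inductive version is arguably more conceptual since it reveals that the identity is really just a consequence of the fact that $\alpha$ and $-\alpha^{-1}$ are the two roots of $x^2-ax-1=0$.
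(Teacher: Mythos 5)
Your primary computation is correct and is essentially identical to the paper's own proof, which also substitutes the closed form from Lemma~\ref{lemma:R-seq} and simplifies the numerator to $(-1)^n\alpha^{-n}(\alpha+\alpha^{-1})$. The inductive alternative you sketch is also valid, but no comparison is needed since your main route matches the paper.
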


\begin{proof}
By Lemma \ref{lemma:R-seq},
\begin{align*}
    \abs{R_{n+1} - R_n\alpha}
    &= \abs{
        \frac{\alpha^{n+1}  - (-\alpha)^{-n-1}} {\alpha+\alpha^{-1}} -
        \frac{\alpha^n      - (-\alpha)^{-n}}     {\alpha+\alpha^{-1}}(\alpha)
    }
    \\
    &= \abs{ (-1)^{n} \frac{\alpha^{-n-1} + \alpha^{-n+1}}{\alpha+\alpha^{-1}} }
    \\
    &= \frac{\alpha^{-n-1} ( \alpha^{2} + 1 )}{\alpha+\alpha^{-1}}
    = \alpha^{-n},
\end{align*}
as desired.
\end{proof}

\section{Proof of Theorem \ref{thm:1-per-big-skips}}

\label{sec:1-per}

Fix an $\alpha$ that satisfies $\eqref{eq:alpha-1-per}$. We define a curve and examine its intersection with the lattice points corresponding to the regular convergents of $\alpha$. Let $Q_n = (R_n, \abs{R_{n+1}-R_n \alpha})$ for any $n \geq 0$. This point is either the lattice point corresponding to a regular convergent or the reflection of said point across the $x$-axis because of Lemma~\ref{lemma:num-den-swap} and our choice of $\alpha$. Fix an $m\in \N$. We define $\Cp$ to be the portion of the boundary of the ball $\Bp_t((0,1))$ in the first quadrant, where $t$ is such that $Q_m$ is on the boundary of $\Bp_t((0,1))$. We can describe the curve $\Cp$ algebraically by the equation
\begin{equation}\label{eq:C-alg}
    t^p =  (x t^{-1})^p + (yt)^p.
\end{equation}
Since $Q_m$ is on $\Cp$, we can write \begin{equation}\label{eq:t-def-C}
    t = \left( \frac{R_m^p}{1-\abs{R_{m+1}-R_m \alpha}^p} \right)^{\frac{1}{2p}}.
\end{equation}

We claim there exists a unique $p=p(m)$ such that $Q_{m+1}$ is also on $\Cp$. Equivalently,
\begin{equation}
    \left( \frac{R_m^p}{1-\abs{R_{m+1}-R_m \alpha}^p} \right)^{\frac{1}{2p}} = t = \left( \frac{R_{m+1}^p}{1-\abs{R_{m+2}-R_{m+1} \alpha}^p} \right)^{\frac{1}{2p}}.
\end{equation} Simplification yields
\begin{equation}
    R_{m}^p - R_{m}^p \abs{ R_{m+2} - R_{m+1} \alpha }^p =
    R_{m+1}^p - R_{m+1}^p \abs{ R_{m+1} - R_m \alpha }^p,
\end{equation}
or, after Lemma \ref{lemma:theta},
\begin{equation} \label{eq:p-def}
    \left(\frac{R_{m+1}}{R_m}\right)^p = \frac{1-\alpha^{-mp-p}}{1-\alpha^{-mp}}.
\end{equation}

\begin{lemma}\label{lemma:p-unique}
    If $m\geq 3$ then there exists a unique $p = p(m)\in (0,1)$ satisfying \eqref{eq:p-def}.
\end{lemma}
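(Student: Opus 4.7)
The plan is to reformulate equation~\eqref{eq:p-def} as a root-finding problem for a strictly monotone function, so that existence and uniqueness both follow from the intermediate value theorem. Write $\rho = R_{m+1}/R_m$; since $R_{m+1} = aR_m + R_{m-1} > R_m$ for $m \geq 2$, we have $\rho > 1$. Define
\[
    h(p) = p \log \rho + \log(1-\alpha^{-mp}) - \log(1-\alpha^{-(m+1)p}),
\]
so that \eqref{eq:p-def} is equivalent to $h(p) = 0$.

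First I would show that $h$ is strictly increasing on $(0,1)$. Differentiating gives
\[
    h'(p) = \log\rho + (\log\alpha)\left(\frac{m}{\alpha^{mp}-1} - \frac{m+1}{\alpha^{(m+1)p}-1}\right).
\]
Setting $c = p\log\alpha$ and $F(x) = x/(e^{cx}-1)$, the bracketed expression equals $F(m) - F(m+1)$. A short calculation shows that the numerator of $F'(x)$, namely $e^{cx}(1-cx) - 1$, vanishes at $x = 0$ and has derivative $-c^2 x e^{cx} < 0$ on $(0,\infty)$, so $F$ is strictly decreasing; thus $F(m) - F(m+1) > 0$, and combined with $\log\rho > 0$ this yields $h'(p) > 0$.

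Next I would evaluate the boundary values. A Taylor expansion $1 - \alpha^{-kp} = kp\log\alpha + O(p^2)$ as $p \to 0^+$ gives $h(0^+) = \log\frac{m}{m+1} < 0$. The condition $h(1) > 0$ amounts to $\rho(1-\alpha^{-m}) > 1 - \alpha^{-(m+1)}$; substituting the explicit formula from Lemma~\ref{lemma:R-seq} and using $\alpha - \alpha^{-1} = a$, clearing denominators reduces this to
\[
    \alpha^m(\alpha - 1) - a + (-1)^m \left[\alpha^{-(m+1)}(\alpha+1) - 2\alpha^{-(2m+1)}\right] > 0.
\]
The inequality $\alpha - 1 \geq \alpha^{-1}$ (equivalent to $a \geq 1$, via $\alpha^2 = a\alpha + 1$) gives $\alpha^m(\alpha - 1) \geq \alpha^{m-1} \geq \alpha^2 = a\alpha + 1$ for $m \geq 3$, so $\alpha^m(\alpha - 1) - a \geq 1$. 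Meanwhile the parity-dependent correction is bounded in absolute value by $(\alpha + 1)\alpha^{-(m+1)} + 2\alpha^{-(2m+1)}$, which is strictly less than $1$ for $m \geq 3$ (using $\alpha \geq \frac{1+\sqrt 5}{2}$). Hence $h(1) > 0$.

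By continuity and strict monotonicity of $h$, the intermediate value theorem produces a unique zero $p = p(m) \in (0,1)$, as required. The main obstacle is the inequality $h(1) > 0$: one must quantitatively compare the growth of $R_{m+1}/R_m$ against correction terms of order $\alpha^{-m}$, and the bound $\alpha - 1 \geq \alpha^{-1}$ is the key ingredient that keeps the final estimate clean.
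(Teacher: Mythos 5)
Your proof is correct, and every step checks out: the derivative identity for $h$, the monotonicity of $x\mapsto x/(e^{cx}-1)$, the limit $h(0^+)=\log\frac{m}{m+1}<0$, and the expansion of $\rho(1-\alpha^{-m})>1-\alpha^{-(m+1)}$ via the Binet-type formula (using $\alpha-\alpha^{-1}=a$) all verify, and the numerical bounds for $m\geq 3$, $\alpha\geq\tfrac12(1+\sqrt5)$ are valid. The overall strategy (monotonicity plus the intermediate value theorem) matches the paper's, but the decomposition is genuinely different. The paper studies $G(p)=[f(p)]^{1/p}$ with $f(p)=\frac{1-\alpha^{-(m+1)p}}{1-\alpha^{-mp}}$ on all of $(0,\infty)$: it first shows $f$ is decreasing via the geometric-sum inequality $\alpha^{-p}(1+\alpha^{-p}+\cdots+\alpha^{-p(m-1)})<m$, deduces that $G$ is a decreasing bijection of $(0,\infty)$ onto $(1,\infty)$ (giving a unique root there), and then localizes the root to $(0,1)$ by the crude estimates $f(1)<1+\alpha^{-3}$ and $\rho\geq\alpha-\alpha^{-3}$. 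You instead take logarithms, work only on $(0,1)$, and get strict monotonicity of $h(p)=p\log\rho-\log f(p)$ in one shot from the decreasingness of $F(x)=x/(e^{cx}-1)$ — a different key inequality, namely $\frac{m}{\alpha^{mp}-1}>\frac{m+1}{\alpha^{(m+1)p}-1}$, which in fact gives the stronger bound $h'(p)>\log\rho$. Your endpoint check $h(1)>0$ and the paper's localization step are two proofs of the same inequality $\rho>f(1)$, yours by exact algebra and theirs by estimation. Your route buys a shorter argument that never needs the behavior of $f$ alone or the limit as $p\to\infty$; the paper's buys the slightly stronger conclusion that the root is unique in all of $(0,\infty)$, and isolates the decreasingness of $f$, which is conceptually reused in the asymptotic analysis of Lemma~\ref{lemma:admissible}.
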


\begin{proof}
Let $m\geq 3$ be an integer and define
\begin{equation}
    f(p) = \frac{1-\alpha^{-mp-p}}{1-\alpha^{-mp}}.
\end{equation}
We claim that $f(p)$ is strictly decreasing for $p\in (0,\infty)$.
Assuming this for now, and using that $f(p)>1$ we find that
\begin{equation}
    \frac{d}{dp} \left[f(p)\right]^{\frac 1p} = \left(\frac{f'(p)}{p f(p)} - \frac{\log f(p)}{p^2}\right)\left[f(p)\right]^{\frac 1p} < 0.
\end{equation}
Since $\lim_{p\to 0^+} [f(p)]^{1/p} = \infty$ and $\lim_{p\to\infty}[f(p)]^{1/p} = 1$, we see that $p\mapsto [f(p)]^{1/p}$ is a bijection from $(0,\infty)$ to $(1,\infty)$.
Thus there is a unique $p$ for which $[f(p)]^{1/p} = R_{m+1}/R_m$.
We claim that this $p$ must be in $(0,1)$.
Indeed, when $p\geq 1$ we have
\begin{equation}
    [f(p)]^{\frac 1p} \leq f(1) = \frac{1-\alpha^{-m-1}}{1-\alpha^{-m}} = 1+\alpha^{-m}\frac{1-\alpha^{-1}}{1-\alpha^{-m}} < 1+\frac{1}{\alpha^3}.
\end{equation}
On the other hand, by Lemma~\ref{lemma:theta} we have
\begin{equation}
    \frac{R_{m+1}}{R_m} \geq \alpha - \frac{1}{\alpha^m R_m} \geq \alpha - \frac{1}{\alpha^3}.
\end{equation}
Since $x > 1+2/x^3$ for all $x>1.6$ and since $\alpha>1.618$, we see that $[f(p)]^{1/p} < R_{m+1}/R_m$,
so the $p$ satisfying \eqref{eq:p-def} must be in $(0,1)$.
Thus it remains to show that $f(p)$ is decreasing.

We have 
\begin{equation}
    f'(p) = -\frac{ \left[m(1-\alpha^{-p})-\alpha^{-p}(1-\alpha^{-mp})\right]\alpha^{-mp}\log\alpha }{(1-\alpha^{-mp})^2},
\end{equation}
so it is enough to show that
\begin{equation}
    m(1-\alpha^{-p})>\alpha^{-p}(1-\alpha^{-mp}).
\end{equation}
Factoring the polynomial $1-(\alpha^{-p})^m$ we find that
\begin{equation}
    \alpha^{-p}\frac{(1-\alpha^{-mp})}{(1-\alpha^{-p})} = \alpha^{-p}\left(1+\alpha^{-p}+\ldots+\alpha^{-p(m-1)}\right) <m.
\end{equation}
This completes the proof.
\end{proof}

We claim that for $p=p(m)$ as above with sufficiently large $m$, $L_\alpha$ is admissible for the ball $\Bp_t((0,1))$. This is equivalent to $Q_{m+k}$ being above $\Cp$ for any $k\in \Z\setminus\{0,1\}$ such that $k > -m$.
By solving \eqref{eq:C-alg} for $y$, and comparing with the $y$-coordinate of $Q_{m+k}$, we find that we can write the condition of $Q_{m+k}$ being above $\Cp$ as
\begin{equation}
    |R_{m+k+1} - R_{m+k}\alpha| > \left(1 - R_{m+k}^pt^{-2p}\right)^\frac{1}{p}.
\end{equation}
After some simplification and substituting the expression in \eqref{eq:t-def-C} for $t$, we see that for a fixed $m$, $Q_{m+k}$ remains above $\Cp$ if and only if 
\begin{align}
     R_{m+k}^p - R_{m+k}^p \abs{ R_{m+1} - R_m \alpha }^p > R_{m}^p - R_{m}^p \abs{ R_{m+k+1} - R_{m+k} \alpha }^p.
\end{align}
By Lemma \ref{lemma:theta}, this can be simplified to 
\begin{align} \label{eq:p-fib-k-ineq}
    \left(\frac{R_{m+k}}{R_m}\right)^p > \frac{1-\alpha^{-mp-kp}}{1-\alpha^{-mp}}.
\end{align}

\begin{figure}
    \centering
    \includegraphics{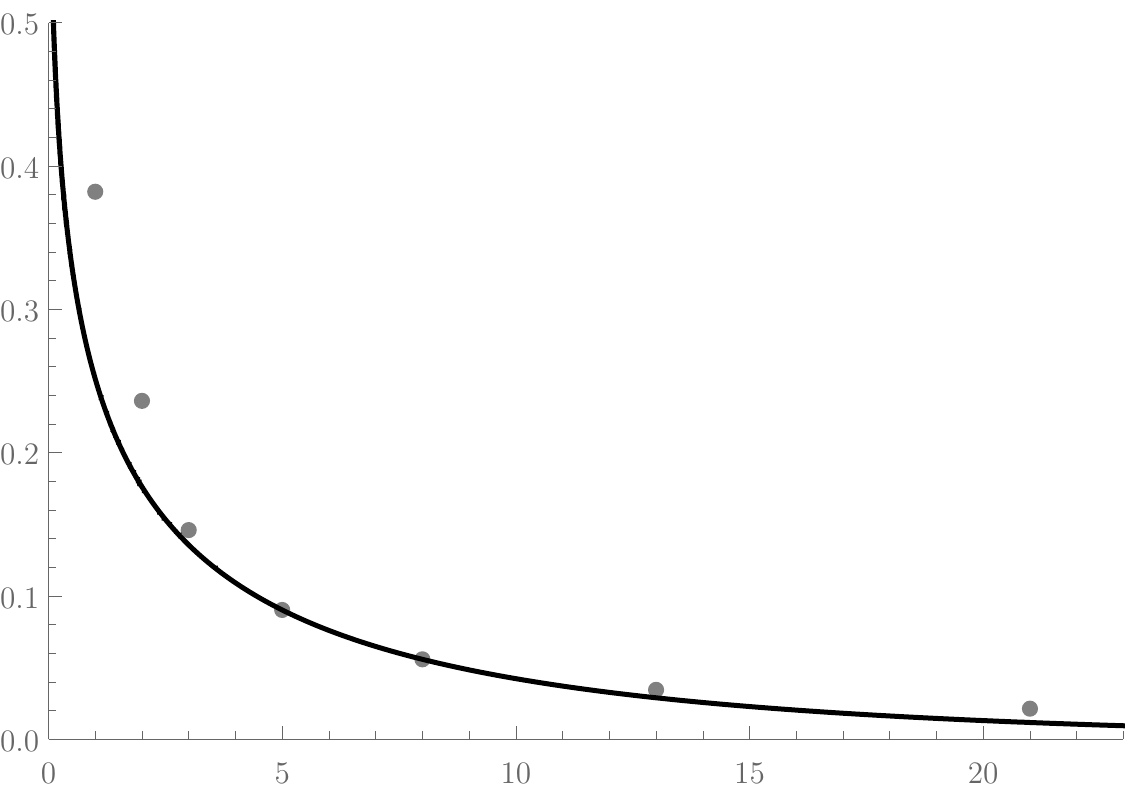}
    \caption{The edge of the ball, $\Cp$, for $p \approx .27$, with $\alpha = \varphi = \frac{1 + \sqrt{5}}{2}$ and $m = 5$. The points $Q_{m+k}$ for $k\in \{-3, -2, -1, 0, 1, 2, 3\}$ are all visible.}
    \label{fig:fib plot}
\end{figure}

\begin{lemma}\label{lemma:admissible}
    For each integer $m\geq 2$, define $p=p(m)\in(0,1)$ by \eqref{eq:p-def}.
    Then $p\sim \frac {\log 2}{m\log\alpha}$ as $m\to\infty$. Furthermore, for all sufficiently large $m$,
    for any $k\in \Z\setminus \{0,1\}$ with $k > -m$, the inequality \eqref{eq:p-fib-k-ineq} holds.
\end{lemma}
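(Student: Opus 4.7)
The plan is to prove the asymptotic for $p$, then recast \eqref{eq:p-fib-k-ineq} as a statement about where a certain function of $n = m + k$ attains its minimum on the positive integers.

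For the asymptotic, set $\mu = mp\log\alpha$, and use $R_{m+1}/R_m = \alpha(1 + O(\alpha^{-2m}))$ from Lemma~\ref{lemma:R-seq}. The uniqueness argument inside the proof of Lemma~\ref{lemma:p-unique}, combined with the fact that $[f(p)]^{1/p}\to 1$ as $m\to\infty$ for any fixed $p > 0$, forces $p\to 0$. Substituting into \eqref{eq:p-def} and expanding $\alpha^p = 1 + p\log\alpha + O(p^2)$, the leading-order equation after dividing by $p\log\alpha$ becomes $e^{-\mu} = 1 - e^{-\mu}$, so $\mu\to\log 2$, giving $p\sim\log 2/(m\log\alpha)$.

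For the inequality, set $n = m+k$ and define
\begin{equation*}
    \Phi(n) := \frac{R_n^p}{1 - \alpha^{-np}}.
\end{equation*}
A direct rearrangement shows \eqref{eq:p-fib-k-ineq} is equivalent to $\Phi(n) > \Phi(m)$, while \eqref{eq:p-def} is precisely the equality $\Phi(m) = \Phi(m+1)$. Thus it suffices to show that, for all sufficiently large $m$, the minimum of $\Phi$ on $\{1, 2, 3, \ldots\}$ is attained only at $\{m, m+1\}$. Using Lemma~\ref{lemma:R-seq} to write $R_n = (\alpha+\alpha^{-1})^{-1}\alpha^n(1 + O(\alpha^{-2n}))$, we have $\Phi(n) = (\alpha+\alpha^{-1})^{-p}\tilde\Phi(n)(1 + O(p\alpha^{-2n}))$, where $\tilde\Phi(n) := y^2/(y-1)$ with $y = \alpha^{np}$. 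A direct computation yields
\begin{equation*}
    \tilde\Phi''(n) = (p\log\alpha)^2 \cdot \frac{y^2(y^2 - 3y + 4)}{(y-1)^3} > 0 \quad\text{for } y > 1
\end{equation*}
(the quadratic $y^2 - 3y + 4$ has negative discriminant), so $\tilde\Phi$ is strictly convex on $(0,\infty)$ with unique minimum at $y = 2$, i.e., at $n_0 = \log 2/(p\log\alpha) = m + O(1)$ by the asymptotic.

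The main obstacle is controlling the error in passing from $\tilde\Phi$ to $\Phi$. For $n$ with $|k| = |n - m|$ bounded, the convexity gap $\tilde\Phi(n) - \tilde\Phi(m)$ is of order $(p\log\alpha)^2 \asymp 1/m^2$, while the multiplicative error from $\Phi/\tilde\Phi$ is $O(p\alpha^{-2m}) = O(\alpha^{-2m}/m)$, which is super-exponentially smaller. For $n$ far from $m$, $\Phi(n)\to\infty$ directly: $\Phi(1) \asymp 1/p \asymp m$ (since $1 - \alpha^{-p} \asymp p$) and $\Phi(n) \sim \alpha^{np}/(\alpha+\alpha^{-1})^p$ grows as $n\to\infty$, both far exceeding $\Phi(m) = O(1)$. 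A case split into bounded $|k|$ and unbounded $|k|$ then verifies $\Phi(n) > \Phi(m)$ in every range, completing the proof.
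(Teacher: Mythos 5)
Your reformulation is genuinely different from the paper's proof and is, I think, the more transparent route. The paper makes a two-term ansatz $p=\frac{p_0}{m}+\frac{p_1}{m^2}+\cdots$, solves for $p_0,p_1$ from the $k=1$ case of \eqref{eq:p-def}, and then compares the $1/m^2$ coefficients of the logarithms of the two sides of \eqref{eq:p-fib-k-ineq}, the winning margin being proportional to $k(k-1)$. Your version replaces this coefficient-matching with a structural statement: \eqref{eq:p-fib-k-ineq} says that $\Phi(n)=R_n^p/(1-\alpha^{-np})$ exceeds $\Phi(m)$, \eqref{eq:p-def} says $\Phi(m)=\Phi(m+1)$, and the smooth model $\tilde\Phi(n)=y^2/(y-1)$, $y=\alpha^{np}$, is strictly convex with $\tilde\Phi''\gtrsim (p\log\alpha)^2$ uniformly in $n$ (your second-derivative computation is correct, and the minimum of $y^2(y^2-3y+4)/(y-1)^3$ over $y>1$ is a positive absolute constant). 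This makes the uniformity in $k$ essentially automatic, which is a real advantage over the expansion method. Your derivation of $\mu=mp\log\alpha\to\log 2$ is also fine.

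Two steps in the case analysis need repair, though both are fixable with tools you already have. First, ``$n_0=\log 2/(p\log\alpha)=m+O(1)$ by the asymptotic'' does not follow from $p\sim\frac{\log 2}{m\log\alpha}$ (that only gives $n_0=m+o(m)$), and even $n_0=m+O(1)$ would not suffice: if $n_0$ were near $m+5$, the integer minimum of $\tilde\Phi$ would sit there, not at $\{m,m+1\}$. The correct move is to avoid locating $n_0$ altogether: the exact identity $\Phi(m)=\Phi(m+1)$ plus the exponentially small relative errors give $\tilde\Phi(m)-\tilde\Phi(m+1)=O(p\alpha^{-2m})$, and then convexity with $\tilde\Phi''\geq c\,(p\log\alpha)^2$ yields $\tilde\Phi(m\pm 1\mp\text{(one step outward)})-\tilde\Phi(m)\geq c(p\log\alpha)^2-O(p\alpha^{-2m})\gtrsim 1/m^2$, and monotone growth thereafter. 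Second, the dichotomy ``bounded $|k|$ versus unbounded $|k|$ where $\Phi(n)\to\infty$'' leaves a hole in the range $1\ll |k|\ll m$ (say $k=\log m$): there $y=\alpha^{np}=2^{1+o(1)}$, so $\Phi(n)$ stays within $o(1)$ of the minimum value $\approx 4(\alpha+\alpha^{-1})^{-p}$ and certainly does not blow up; that range must be handled by the convexity gap, not by size. The split that works is $n\leq \ep m$ (where the relative error $O(p\alpha^{-2n})$ is only $O(p)$ but $\tilde\Phi(n)\geq 1/(\alpha^{np}-1)\gtrsim 1/\ep$ dwarfs $\tilde\Phi(m)\to 4$) versus $n>\ep m$ (where the errors are $O(p\alpha^{-2\ep m})$, far below the $1/m^2$ convexity gap). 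With those two corrections your argument closes.
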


\begin{proof}
Let $k\in \Z\setminus\{0\}$.
Using Lemma \ref{lemma:R-seq} we have
\begin{align}
    \frac{R_{m+k}}{R_m} 
    &= \alpha^k\frac{1-(-1)^{m+k}\alpha^{-2m-2k}}{1-(-1)^{m}\alpha^{-2m}} \\
    &= \alpha^k \left(1 + (-1)^m \alpha^{-2m}\frac{1-(-1)^k\alpha^{-2k}}{1-(-1)^m \alpha^{-2m}}\right) \\
    &= \alpha^k\left(1 + O(\alpha^{-2m})\right).
\end{align}
Thus we have
\begin{equation} \label{eq:fib-ratio-k-asymp}
    \log\left(\frac{R_{m+k}}{R_m}\right) = k\log \alpha + O\left(\alpha^{-2m}\right).
\end{equation}

Making the ansatz
\begin{equation}
    p = \frac{p_0}{m} + \frac{p_1}{m^2} + \frac{p_2}{m^3} + O\left(\frac{1}{m^4}\right),
\end{equation}
we find that
\begin{align}
    \alpha^{-mp} &= \alpha^{-p_0-p_1m^{-1}-p_2m^{-2}+O(m^{-3})} \\
    &= \alpha^{-p_0} - p_1 \frac{\alpha^{-p_0}\log\alpha}{m} + \frac{\alpha^{-p_0}\log\alpha(p_1^2\log\alpha-2p_2)}{2m^2} + O\left(\frac{1}{m^3}\right),
\end{align}
and thus
\begin{multline} \label{eq:phi-p-asymp}
    \log\left(\frac{1-\alpha^{-mp-kp}}{1-\alpha^{-mp}}\right) \\
    =  \frac{k p_0\log\alpha}{(\alpha^{p_0}-1)m}
    - \frac{k\log \alpha \left((k p_0^2+2 p_0p_1) \alpha^{p_0} \log \alpha-2p_1
   \left(\alpha^{p_0}-1\right)\right)}{2 \left(\alpha^{p_0}-1\right)^2 m^2} +
    O\left(\frac{1}{m^3}\right).
\end{multline}
Hence, by the logarithm of \eqref{eq:p-def}, \eqref{eq:fib-ratio-k-asymp}, and \eqref{eq:phi-p-asymp} with $k=1$ we must have
\begin{equation}
    \frac{p_0 \log\alpha}{m} = \frac{p_0\log\alpha}{(\alpha^{p_0}-1)m}
\end{equation}
and
\begin{equation}
    \frac{p_1\log \alpha}{m^2} =  - \frac{\log \alpha \left((p_0^2+2 p_0p_1) \alpha^{p_0} \log \alpha-2p_1
   \left(\alpha^{p_0}-1\right)\right)}{2 \left(\alpha^{p_0}-1\right)^2 m^2}.
\end{equation}
The first equation shows that $\alpha^{p_0}-1=1$, which shows that $p_0 = \frac{\log 2}{\log \alpha}$ and simplifies the second equation significantly.
Thus
\begin{equation} \label{eq:p-m-asymp}
    p = \frac{\log 2}{m\log\alpha}\left( 1 - \frac{1}{2m} \right) + O\left(\frac{1}{m^3}\right).
\end{equation}

Now we assume that $k\neq 1$.
Using \eqref{eq:fib-ratio-k-asymp}, \eqref{eq:phi-p-asymp}, and \eqref{eq:p-m-asymp}, the logarithm of the inequality \eqref{eq:p-fib-k-ineq} becomes
\begin{equation}
    \frac{k\log 2}{m} - \frac{k\log 2}{2m^2} + O\left(\frac{1}{m^3}\right) > \frac{k\log 2}{m} - \frac{k\log 2}{2m^2}\left(2k\log 2 + 1-2\log 2\right) + O\left(\frac{1}{m^3}\right),
\end{equation}
which, for sufficiently large $m$, is true for any $k\in \Z\setminus \{0,1\}$.
\end{proof}

For any sufficiently large $m$ and with $p=p(m)$, the first two lattice points that touch the boundary of $\Bp_t((0,1))$ as $t$ increases from $1$ must be the lattice points corresponding to $Q_m$ and $Q_{m+1}$ by the previous lemma, and furthermore the lattice must be admissible for this ball. This then gives us that $r_0/s_0 = R_{m+2}/R_{m+1}=p_m/q_m$.
\qed

\bibliographystyle{plain}
\bibliography{biblio.bib}

\end{document}